\theoremstyle{plain}
\newtheorem{theorem}{Theorem}[section]
\newtheorem{cor}[theorem]{Corollary}
\newtheorem{prop}[theorem]{Proposition}
\newtheorem{lemma}[theorem]{Lemma}
\newtheorem{utheorem}{\textrm{\textbf{Theorem}}}
\theoremstyle{definition}
\newtheorem{definition}[theorem]{Definition}
\newtheorem{remark}[theorem]{Remark}
\newtheorem{example}[theorem]{Example}
\numberwithin{equation}{section}
\newcommand{\bm}{\mathbf{m}}
\newcommand{\bn}{\mathbf{n}}
\newcommand{\bu}{\mathbf{u}}
\newcommand{\bv}{\mathbf{v}}
\newcommand\R{\mathbb{R}}
\newcommand\Z{\mathbb{Z}}
\newcommand\bp{\mathbb{P}}
\renewcommand{\geq}{\geqslant}
\renewcommand{\leq}{\leqslant}
\begin{document}


\title[Horn--Loewner positivity theorem; symmetric function
identities]{Smooth entrywise positivity preservers,\\
a Horn--Loewner master theorem,\\ and symmetric function identities}

\dedicatory{To Roger A.\ Horn and the memory of Charles Loewner, with
admiration}

\author{Apoorva Khare}
\address[A.~Khare]{Indian Institute of Science, Bangalore -- 560012,
India; and Analysis and Probability Research Group, Bangalore -- 560012,
India}
\email{\tt khare@iisc.ac.in}

\date{March 25, 2021, and, in revised form, September 21, 2021}

\subjclass[2010]{Primary 15B48; Secondary 05E05, 15A24, 15A45, 26C05,
26D10}

\keywords{Positive semidefinite matrix,
entrywise function,
Hadamard product,
Schur polynomial,
smooth function,
necessary condition,
nonzero derivatives,
determinantal expansion}

\thanks{This work was partially supported by Ramanujan Fellowship grant
SB/S2/RJN-121/2017, MATRICS grant MTR/2017/000295, and SwarnaJayanti
Fellowship grants SB/SJF/2019-20/14 and DST/SJF/MS/2019/3 from SERB and
DST (Govt.~of India), by grant F.510/25/CAS-II/2018(SAP-I) from UGC
(Govt.~of India), and by a Young Investigator Award from the Infosys
Foundation.}

\begin{abstract}
A special case of a fundamental result of Loewner and Horn
[\textit{Trans. Amer. Math. Soc.} 1969] says that given an integer $n
\geq 1$, if the entrywise application of a smooth function $f :
(0,\infty) \to \mathbb{R}$ preserves the set of $n \times n$ positive
semidefinite matrices with positive entries, then $f$ and its first $n-1$
derivatives are non-negative on $(0,\infty)$. In a recent joint work with
Belton--Guillot--Putinar [\textit{J.\ Eur.\ Math.\ Soc.}, in press], we
proved a stronger version, and used it to strengthen the
Schoenberg--Rudin characterization of dimension-free positivity
preservers [\textit{Duke Math.\ J.}\ 1942, 1959].

In recent works with Belton--Guillot--Putinar [\textit{Adv.\ Math.}\
2016] and with Tao [\textit{Amer.\ J.\ Math.}, in press] we used local,
real-analytic versions at the origin of the Horn--Loewner condition, and
discovered unexpected connections between entrywise polynomials
preserving positivity and Schur polynomials. In this paper,
we unify these two stories via a Master Theorem (Theorem~A) which
(i)~simultaneously unifies and extends all of the aforementioned
variants; and
(ii)~proves the positivity of the first $n$ nonzero Taylor coefficients
at individual points rather than on all of $(0,\infty)$.

A key step in the proof is a new determinantal / symmetric function
calculation (Theorem~B), which shows that Schur polynomials arise
naturally from considering arbitrary entrywise maps that are sufficiently
differentiable. Of independent interest may be the following application
to symmetric function theory: we extend the Schur function expansion of
Cauchy's (1841) determinant (whose matrix entries are geometric series $1
/ (1 - u_j v_k)$), as well as of a determinant of Frobenius [\textit{J.\
reine angew.\ Math.}\ 1882] (whose matrix entries are a sum of two
geometric series), to arbitrary power series, and over all commutative
rings.
\end{abstract}

\maketitle

\section{Introduction and main results}

Recently in~\cite{BGKP-fixeddim, BGKP-FPSAC, KT2, KT}, novel connections
were discovered between polynomials acting entrywise that preserve matrix
positivity and symmetric functions, specifically Schur polynomials. We
explore these connections in greater depth, and prove that Schur
polynomials emerge naturally from differentiating determinants involving
\textit{arbitrary} sufficiently differentiable functions. To the best of
our knowledge, this intriguing connection between analysis and symmetric
function theory is not recorded in the literature. This is our
Theorem~\ref{Phorn}, and as an application we prove the Master
Theorem~\ref{Tmain}, on functions acting entrywise that preserve positive
semidefinite matrices. We also apply it to provide a novel symmetric
function identity that subsumes modern results as well as classical ones
by Cauchy and Frobenius -- see Theorem~\ref{Tsymm}.

\subsection{Entrywise calculus and positivity: notation and history}

A real symmetric matrix $A_{n \times n}$ is positive semidefinite if all
its eigenvalues are non-negative real numbers -- equivalently, the
quadratic form $x \mapsto x^T A x, \ x \in \R^n$ is non-negative
definite, i.e., takes values in $[0,\infty)$.
Given an integer $n \geq 1$ and a domain $I \subset \R$, let $\bp_n(I)$
comprise the positive semidefinite $n \times n$ matrices with entries in
$I$. We work only with domains $I = (a,b)$ or $[a,b)$ for $0 \leq a < b
\leqslant \infty$.

A function $f : I \to \R$ acts \textit{entrywise} on a vector or a matrix
$A = (a_{jk}) \in I^{m \times n}$ for integers $m,n \geq 1$ via: $f[A] :=
(f(a_{jk}))$. If $f(x) = x^k$ is an integer power function, we write
$f[A] = A^{\circ k}$ for the entrywise power -- i.e., the $k$-fold
Schur/Hadamard product -- of $A$. Let ${\bf 1}_{m \times n}$ denote the
$m \times n$ matrix with all entries $1$.

A question of significant interest in the analysis literature throughout
the past century is to understand the entrywise
functions\footnote{We refer to polynomials -- and more generally,
functions -- that act entrywise on matrices as \textit{entrywise
polynomials/functions}, respectively.} that preserve positive
semidefiniteness, which we occasionally term \textit{positivity} in the
sequel. The first result in this area is the Schur product theorem
\cite{Schur1911}, which asserts that the set $\bp_n(I)$ is closed under
the entrywise product $A \circ B := (a_{jk} b_{jk})_{j,k=1}^n$ if $I$ is
closed under multiplication. Using that $\bp_n(\R)$ is a closed convex
cone, P\'olya and Szeg\"o \cite{polya-szego} observed the following
immediate consequence of the Schur product theorem: if $f(x) =
\sum_{k=0}^\infty c_k x^k$ is a convergent power series on $I$ and $c_k
\geq 0\ \forall k$, then $f[-]$ preserves positivity on $\bp_n(I)$. A
celebrated result of Schoenberg \cite{Schoenberg42}, subsequently
improved by Rudin \cite{Rudin59}, shows that there are no other functions
that preserve positivity in all dimensions for $I = (-1,1)$. These
results are motivated by and have connections to metric geometry,
positive definite functions, harmonic analysis, and analysis of measures
on Euclidean space and on tori.\footnote{See the surveys
\cite{BGKP-survey1,BGKP-survey2} for more on the classical and modern
interest in entrywise positivity preservers.}
Similar results have been shown for $I = (-\rho,\rho)$ or $(0,\rho)$ for
$0 < \rho \leq \infty$, as well as for complex domains -- in both one and
several variables -- in the years since Schoenberg's and Rudin's work.

Schoenberg's theorem has a challenging mathematical refinement that is
strongly motivated by modern-day applications in high-dimensional
covariance estimation. Namely: is it possible to classify the entrywise
positivity preservers in a \textit{fixed} dimension $n$? This problem was
resolved in 1979 by Vasudeva \cite{vasudeva79} for $n=2$, but remains
open for every $n \geq 3$. In this paper, we focus on a necessary
condition for this to happen.

\subsection{The Horn--Loewner theorem and its variants}

The focus of the present paper is a fundamental result on entrywise
preservers in fixed dimension $n \geq 3$. This result can be found in
Horn's 1967 thesis, and Horn attributes it to Loewner:\footnote{As the
author has also observed in the Stanford Library archives, the argument
in~\cite{horn} was outlined in a letter to Josephine Mitchell, written by
Loewner on October 24, 1967.}

\begin{theorem}[Necessary condition in fixed dimension, see
\cite{horn}]\label{Thorn}
Suppose $I = (0,\infty)$ and $f : I \to \R$ is continuous. Fix an integer
$n \geq 1$ and suppose $f[A] \in \bp_n(\R)\ \forall A \in \bp_n(I)$.
Then,
\[
f \in C^{n-3}(I), \qquad
f^{(k)}(x) \geq 0, \forall x \in I,\ 0 \leq k \leq n-3,
\]

\noindent and $f^{(n-3)}$ is a convex non-decreasing function on $I$. In
particular, if $f \in C^{n-1}(I)$, then $f^{(k)}(x) \geq 0$ for all $x
\in I, 0 \leq k \leq n-1$.
\end{theorem}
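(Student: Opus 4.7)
The plan is to split the theorem into two parts: (i) under the stronger hypothesis $f \in C^{n-1}(I)$, show the pointwise non-negativity $f^{(k)} \geq 0$ for $0 \leq k \leq n-1$ via a rank-two perturbation of a constant matrix; and (ii) recover the general regularity statement ($f \in C^{n-3}$ with $f^{(n-3)}$ convex non-decreasing) by induction on $n$ using standard convex analysis.

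A preliminary observation I would use throughout: if $f$ preserves positivity on $\bp_n(I)$, it automatically does so on $\bp_k(I)$ for every $1 \leq k \leq n$. Indeed, given $B \in \bp_k(I)$, duplicate a single row of $B$ and the corresponding column a total of $n - k + 1$ times to obtain $\tilde B \in \bp_n(I)$; since the entrywise map $f[\,\cdot\,]$ commutes with row/column duplication, positive semidefiniteness of $f[\tilde B]$ is equivalent to that of $f[B]$.

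For part (i), fix $a \in I$ and distinct positive reals $u_1, \ldots, u_n$. For small $\epsilon > 0$ form
\[
A_\epsilon := a\,\mathbf{1}_{n \times n} + \epsilon\, u u^T \in \bp_n(I), \qquad u := (u_1, \ldots, u_n)^T,
\]
and Taylor-expand entrywise to order $n - 1$ in $\epsilon$: this yields $f[A_\epsilon] = V D_\epsilon V^T + R_\epsilon$, where $V = (u_j^{i-1})_{j,i=1}^n$ is Vandermonde, $D_\epsilon = \mathrm{diag}(f^{(i)}(a)\epsilon^i / i!)_{i=0}^{n-1}$, and $\|R_\epsilon\|_{\max} = o(\epsilon^{n-1})$. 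Positivity of $f[A_\epsilon]$ forces $\det f[A_\epsilon] \geq 0$, whose leading $\epsilon$-asymptotic is
\[
\det(V)^2 \cdot \epsilon^{n(n-1)/2} \cdot \prod_{i=0}^{n-1} \frac{f^{(i)}(a)}{i!},
\]
with $\det(V) \neq 0$ (the $R_\epsilon$ contribution to the determinant being $o(\epsilon^{n(n-1)/2})$ by an adjugate estimate), yielding $\prod_{i=0}^{n-1} f^{(i)}(a) \geq 0$. Combined with the preliminary observation applied in each dimension $k \leq n$, one obtains $\prod_{i=0}^{k-1} f^{(i)}(a) \geq 0$ for all $a \in I$ and $1 \leq k \leq n$. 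To pass from these product inequalities to the individual sign $f^{(k)}(a) \geq 0$, I would induct on $k$: if every lower derivative at $a$ is strictly positive, the product inequality at level $k+1$ gives the conclusion at once; if some earlier derivative vanishes at $a$, then using the inductive hypothesis on non-negativity together with differentiability of $f^{(i)}$ for $i \leq n-2$, the smallest vanishing one attains a local minimum at $a$, forcing the next derivative also to vanish, and iterating yields $f^{(k)}(a) = 0$.

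For part (ii), I would induct on $n$. The base $n = 3$ case amounts to continuity (assumed), monotonicity, and convexity of $f$, all of which the rank-two construction above delivers via the $2 \times 2$ and $3 \times 3$ minor inequalities of $f[A_\epsilon]$. For the inductive step, the level-$(n-1)$ statement yields $f \in C^{n-4}(I)$ with $f^{(n-4)}$ convex non-decreasing; convex functions on open intervals are automatically continuous and one-sided differentiable, so $f^{(n-3)}$ exists off a countable set, and the remaining upgrade — continuity, monotonicity and convexity of $f^{(n-3)}$ — comes from refining the above determinantal inequalities using principal minors of $f[A_\epsilon]$ of all sizes. The main obstacle I anticipate is precisely this regularity bootstrap: the determinantal argument in part (i) is essentially algebraic once the Vandermonde structure is identified, but converting pointwise positivity conditions into actual smoothness of $f$ — rather than merely sign information at points where $f$ is already smooth — requires delicate one-sided convex-analytic input, and this is where the original proof in Horn's thesis does its hardest work.
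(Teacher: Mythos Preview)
Your part~(i) is essentially the classical Horn--Loewner computation and matches how the paper handles the smooth case (via the specialization $p=q=n$ of Theorem~\ref{Tmain}). Two minor points: (a)~the paper computes the derivatives $\Delta^{(M)}(0)$ of $\Delta(t) := \det f[a{\bf 1}_{n\times n} + t\bu\bu^T]$ directly via Theorem~\ref{Phorn} and then applies L'H\^{o}pital, whereas your Taylor-expand-then-take-determinant route is equivalent but your ``adjugate estimate'' hides a nontrivial step --- one needs that every $(n-k)\times(n-k)$ minor of $VD_\epsilon V^T$ is $O\bigl(\epsilon^{\binom{n-k}{2}}\bigr)$ (a Cauchy--Binet calculation), not only the $k=1$ case; and (b)~to pass from the product inequalities $\prod_{i<k} f^{(i)}(a) \geq 0$ to individual signs, the paper perturbs to $f_\delta := f + \delta x^{n-1}$ and lets $\delta \to 0^+$, whereas your local-minimum argument is a valid alternative.

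The genuine gap is part~(ii). Your inductive scheme must upgrade ``$f^{(n-4)}$ convex non-decreasing'' to ``$f^{(n-4)} \in C^1$ with $f^{(n-3)}$ convex non-decreasing'', and convexity alone does not yield $C^1$; you correctly flag this as the obstacle but supply no argument, and your suggestion of ``refining the determinantal inequalities'' cannot work as stated, since those inequalities presuppose the very smoothness you are trying to establish. Neither the paper nor Horn's original proof attempts a direct convex-analytic induction. Instead one mollifies: set $f_\delta := f * \phi_\delta$ for a smooth compactly supported $\phi_\delta \geq 0$, observe that $f_\delta$ is smooth and that
\[
f_\delta[a{\bf 1}_{n\times n} + t\bu\bu^T] = \int f\bigl[(a-s){\bf 1}_{n\times n} + t\bu\bu^T\bigr]\,\phi_\delta(s)\,ds
\]
remains positive semidefinite, apply the smooth case to obtain $f_\delta^{(k)} \geq 0$ for $0 \leq k \leq n-1$, and let $\delta \to 0$ to conclude that $f$ has non-negative forward differences of every order up to $n-1$. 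The regularity conclusion $f \in C^{n-3}$ with $f^{(n-3)}$ convex non-decreasing then follows from Boas--Widder \cite{Boas-Widder}; see the paragraph following Theorem~\ref{Thankel} and Remarks~\ref{Rmollifier}--\ref{RBWerratum}.
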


Theorem~\ref{Thorn} is important for several reasons:
\begin{enumerate}
\item To our knowledge, this 1967 result (together with its refinements,
which we will discuss) remains to this day the \textit{only} known
necessary condition for a function to be an entrywise positivity
preserver in a fixed dimension.

\item Theorem~\ref{Thorn} is sharp in the number of non-negative
derivatives of $f$ on $I$. We mention several such settings in
Example~\ref{Exbddunbdd}, providing in each case positivity preservers
that work in dimension $n$ but not $n+1$.

\item This fixed dimension result can be used to prove the dimension-free
version, i.e.~Schoenberg's theorem over $I = (0,\rho)$ for $0 < \rho \leq
\infty$; the proof uses Bernstein's theorem on absolutely monotonic
functions. In turn, the dimension-free version over $(0,\rho)$ (first
shown by Vasudeva~\cite{vasudeva79}) can be used to prove Schoenberg's
theorem over $I = (-\rho, \rho)$ by using less sophisticated machinery
compared to Schoenberg or Rudin's works. In fact this approach has proved
even more successful: in recent joint work \cite{BGKP-hankel}, we first
showed a stronger version of Theorem~\ref{Thorn}; then using it, a
strengthening of Schoenberg's theorem; and finally, multivariable
analogues of these results.
\end{enumerate}

In this paper, we are interested in strengthening Theorem~\ref{Thorn} in
multiple ways. We begin by stating several refinements proved in the
analysis literature -- our main result simultaneously extends all of
these variants.

The first strengthening begins with the observation that the argument in
the proof of Theorem~\ref{Thorn} is entirely local. With this in mind,
the domain of $f$ can be generalized to $(0,\rho)$ for any $0 < \rho \leq
\infty$. Moreover, the continuity assumption can be removed, in the
spirit of Rudin's strengthening \cite{Rudin59} of Schoenberg's theorem
\cite{Schoenberg42}. Finally, Theorem~\ref{Thorn} uses only a special
sub-family of matrices of rank at most $2$. Thus, in \cite{BGKP-hankel,
GKR-lowrank}, the following was shown (we state only the stronger of the
two results):

\begin{theorem}[{See \cite[Theorem 4.2]{BGKP-hankel}}]\label{Thankel}
Suppose that $0 < \rho \leq \infty$, $I = (0,\rho)$, and $f : I \to \R$.
Fix $u_0 \in (0,1)$ and an integer $n \geq 1$, and define $\bu := (1,
u_0, \dots, u_0^{n-1})^T$. Suppose that $f[A] \in \bp_2(\R)$ for all $A
\in \bp_2(I)$, and also that $f[A] \in \bp_n(\R)$ for all Hankel matrices
$A = a {\bf 1}_{n \times n} + t \bu \bu^T$, with $a \in \overline{I}$ and
$t \geq 0$ such that $a+t \in I$. Then the conclusions of
Theorem~\ref{Thorn} hold.
\end{theorem}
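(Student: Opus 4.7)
The plan is to separate the two parts of the hypothesis and feed them into distinct regularity conclusions: the $\bp_2(I)$-preservation supplies baseline regularity of $f$ (nonnegativity, monotonicity, continuity), after which the rank-$2$ Hankel family produces the derivative positivity statements by an induction on $k$. For the baseline: from $f[A]\in\bp_2(\R)$ for $A\in\bp_2(I)$, the $1\times 1$ positivity gives $f\geq 0$, while applying the $2\times 2$ minor to $A=\begin{pmatrix}x & z\\ z & y\end{pmatrix}$ with $z^2=xy$ yields $f(z)^2\leq f(x)f(y)$. Specializing $x=y$ forces $f$ to be non-decreasing; substituting $u=\log x,\,v=\log y$ shows that $u\mapsto\log f(e^u)$ is midpoint convex on the open set where $f>0$. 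Since a monotone midpoint-convex function is continuous, $f\in C^0(I)$ (any zero set of $f$ is an initial subinterval of $I$ by monotonicity, and requires only a short one-sided argument).

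Next I would fix $a$ with $a+t\in I$ for small $t>0$ and, for each $k\in\{1,\ldots,n\}$, consider the leading $k\times k$ principal submatrix $A_k(t):=a\mathbf{1}_{k\times k}+t\bu_k\bu_k^T$, where $\bu_k:=(1,u_0,\ldots,u_0^{k-1})^T$. By hypothesis $f[A(t)]\in\bp_n$, so $\det f[A_k(t)]\geq 0$ for every $k$. I would then prove by induction on $k=1,\ldots,n-2$ that $f\in C^{k-1}(I)$ with $f^{(j)}\geq 0$ on $I$ for $0\leq j\leq k-1$. Given the inductive hypothesis, I would Taylor-expand $f$ to order $k-2$ at $a$ and substitute into the entry $f(a+tu_0^{(i-1)+(j-1)})$ of $f[A_k(t)]$, obtaining
\[
f[A_k(t)] = \sum_{p=0}^{k-2}\frac{f^{(p)}(a)}{p!}\,t^p\,\bu_k^{(p)}(\bu_k^{(p)})^T + R(t),
\]
where $\bu_k^{(p)}:=(1,u_0^p,u_0^{2p},\ldots,u_0^{(k-1)p})^T$ and $R(t)_{ij}$ is the Taylor remainder of $f$ at the scalar $tu_0^{(i-1)+(j-1)}$. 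The Taylor sum has rank $\leq k-1$, so by multilinearity the leading $t\to 0^+$ contribution to $\det f[A_k(t)]$ should factor as
\[
V(\bu_k)^2\,t^{\binom{k}{2}}\,\prod_{p=0}^{k-2}\frac{f^{(p)}(a)}{p!}\cdot D_k(a;t)+o\bigl(t^{\binom{k}{2}}\bigr),
\]
where $V(\bu_k)^2>0$ is a Vandermonde square and $D_k(a;t)$ is a $(k-1)$-st order divided difference of $f$ on $k$ nodes clustering at $a$. Nonnegativity of the minor, together with the inductively-established positivity of $f(a),\ldots,f^{(k-2)}(a)$, forces $D_k(a;t)\geq 0$; a standard divided-difference argument (together with the already-established continuity of $f$) then promotes this to $f^{(k-1)}(a)\geq 0$ and $f\in C^{k-1}$ near $a$.

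The final conclusions follow by running the same calculation at $k=n-1$ and $k=n$: the limiting divided differences $D_{n-1}$ and $D_n$ translate, respectively, into the monotonicity and the convexity of $f^{(n-3)}$. Under the stronger hypothesis $f\in C^{n-1}$, a Taylor expansion one term further makes the leading coefficient of $\det f[A_k(t)]$ literally equal to $V(\bu_k)^2\,t^{\binom{k}{2}}\prod_{p=0}^{k-1}f^{(p)}(a)/p!$, directly yielding $f^{(k-1)}\geq 0$ for all $1\leq k\leq n$. The main obstacle is the rank-and-multilinearity step above: isolating the Vandermonde-square factor and the inductively-positive product so that positivity of a single new divided difference $D_k(a;t)$ can be extracted, all without assuming any smoothness beyond the inductive $C^{k-2}$. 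This is exactly where the geometric-progression form of $\bu_k$, the Hankel/rank-$2$ structure of $A_k(t)$, and the new Schur-polynomial identity of Theorem~\ref{Phorn} work in concert.
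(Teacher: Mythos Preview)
Your treatment of the smooth case ($f\in C^{n-1}$) in the final paragraph is essentially the paper's argument: this is exactly what Theorem~\ref{Phorn} gives (the derivatives $\Delta^{(j)}(0)$ vanish for $j<\binom{k}{2}$, and $\Delta^{(\binom{k}{2})}(0)/\binom{k}{2}!=V(\bu_k)^2\prod_{p=0}^{k-1}f^{(p)}(a)/p!$), and Proposition~\ref{Pvariants} packages this as the specialization of Theorem~\ref{Tmain} with $p=q=n$, $u_k=u_0^{k-1}$, $\epsilon=\rho-a$.

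The gap is in the non-smooth part, and it is structural. The paper does not attempt your inductive regularity bootstrap; instead (see the paragraph after Theorem~\ref{Thankel} and Remark~\ref{Rmollifier}) it mollifies $f$ to a smooth $f_\delta$, applies the smooth case to $f_\delta$, and then invokes Boas--Widder to pass from $f_\delta^{(j)}\geq 0$ back to the $C^{n-3}$ and convexity conclusions for $f$. Your proposed alternative---Taylor-expand $f$ to order $k-2$, subtract the rank-$(k-1)$ piece, and read off a $(k-1)$st divided difference from the remainder---does not close. With only $f\in C^{k-2}$ the Peano remainder is merely $o((\cdot)^{k-2})$ entrywise, too weak to justify the displayed $t^{\binom{k}{2}}$ asymptotics or to isolate a well-defined $D_k(a;t)$ with a controlled limit; and the step ``nonnegative $D_k\Rightarrow f\in C^{k-1}$ with $f^{(k-1)}\geq 0$'' is precisely the hard analytic content supplied by Boas--Widder, not a routine divided-difference fact. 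Your closing appeal to Theorem~\ref{Phorn} does not resolve this: that identity \emph{presupposes} $f$ is $M$-times differentiable at $a$, so it cannot be invoked at a stage where that very smoothness is in question. (A smaller slip: ``specializing $x=y$'' in your $\bp_2$ step is a tautology; monotonicity instead follows from applying the hypothesis to $\bigl(\begin{smallmatrix}y&x\\x&y\end{smallmatrix}\bigr)\in\bp_2(I)$ for $0<x<y$.)
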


\noindent Theorem~\ref{Thankel} subsumes Theorem~\ref{Thorn} and the
aforementioned examples (see Example~\ref{Exbddunbdd}).

In this paper, we are interested in the case of smooth -- i.e.,
infinitely differentiable -- functions $f$, from which the general case
follows using a remarkable~1940 result by Boas--Widder~\cite{Boas-Widder}
(see Remark~\ref{RBWerratum}, which fixes a minor typo in their proof).
Thus, we henceforth work with smooth functions -- in which case the
assumption in Theorem~\ref{Thankel} concerning $\bp_2(I)$ is no longer
required; see \cite{BGKP-hankel} for details.

For smooth functions, one requires weaker assumptions, and reaches
stronger conclusions. Indeed, in ~\cite{BGKP-fixeddim} and~\cite{KT}, we
showed:

\begin{lemma}\label{Lhorn}
Let $n \geqslant 1$ and $0 < \rho \leqslant \infty$.
Suppose that $f(x) = \sum_{k \geq 0} c_k x^k$ is a convergent power
series on $I = [0,\rho)$ that is entrywise positivity
preserving\footnote{The results in \cite{KT} are stated for $I =
(0,\rho)$, but this is equivalent to using $[0,\rho)$ if $f$ is a power
series.} on rank-one matrices in $\bp_n(I)$.
Further assume that $c_{m'} < 0$ for some $m'$.
\begin{enumerate}
\item If $\rho < \infty$, then $c_m > 0$ for at least $n$ values of $m <
m'$. In particular, the first $n$ nonzero Maclaurin coefficients of $f$,
if they exist, must be positive.

\item If instead $\rho = \infty$, then $c_m > 0$ for at least $n$ values
of $m < m'$ and at least $n$ values of $m > m'$. In particular, if $f$ is
a polynomial, then the first $n$ nonzero coefficients and the last $n$
nonzero coefficients of $f$, if they exist, are all positive.
\end{enumerate}
\end{lemma}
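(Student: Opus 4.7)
The plan is to reduce the positive semidefiniteness of $f[\bu\bu^T]$ on rank-one matrices to a scalar inequality that isolates a single coefficient $c_{k_i}$. Expanding
\[
\bv^T f[\bu \bu^T] \bv \;=\; \sum_{m \geq 0} c_m\, p_m^2 \;\geq\; 0, \qquad p_m := \sum_{j=1}^n v_j u_j^m,
\]
I would fix an $n$-element index set $K = \{k_1 < \cdots < k_n\} \subset \Z_{\geq 0}$ and an index $i \in \{1, \ldots, n\}$, take $\bu = t\bu_0$ for a generic positive $\bu_0$ with distinct entries, and choose $\bv = \bv(t)$ so that the moments satisfy $p_{k_l} = \delta_{l,i}$ on $K$. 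A short Vandermonde inversion yields $\bv \sim t^{-k_i}$ and $p_m = t^{m-k_i} q_m$ for $m \notin K$, where the $q_m$ are constants depending only on $(\bu_0, K, i)$ and are nonzero for generic $\bu_0$.

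With this choice, the positivity condition reduces to
\[
c_{k_i} \;+\; \sum_{m \notin K} c_m q_m^2 \, t^{2(m-k_i)} \;\geq\; 0 \qquad (t > 0 \text{ small}).
\]
Sending $t \to 0^+$ kills the $m > k_i$ tail and blows up the finitely many $m < k_i$ terms at rate $t^{-2(k_i - m)}$, giving a clean dichotomy: either the smallest $m \notin K$ with $m < k_i$ and $c_m \neq 0$ satisfies $c_m > 0$ (governing the dominant divergence), or no such $m$ exists and $c_{k_i} \geq 0$. To prove part~(1) by contradiction, set $P_< := \{m < m' : c_m > 0\}$ and suppose $|P_<| \leq n-1$. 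Choose $K := P_< \cup \{m'\} \cup S$ with $S \subset \{m > m'\}$ an arbitrary subset of size $n - 1 - |P_<|$, and take $k_i := m'$. The indices $m < k_i = m'$ outside $K$ are exactly $\{0, 1, \ldots, m'-1\} \setminus P_<$, hence all satisfy $c_m \leq 0$; the dichotomy then forces either a strictly positive $c_m$ among non-positive values (impossible) or $c_{k_i} = c_{m'} \geq 0$ against $c_{m'} < 0$. Either branch is a contradiction, so $|P_<| \geq n$.

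The small-$m$ half of part~(2) is the same argument verbatim, since it only relies on being able to take $t$ arbitrarily small. For the large-$m$ half I would mirror the construction: let $P_> := \{m > m' : c_m > 0\}$, suppose $|P_>| \leq n - 1$ for contradiction, take $K := S' \cup \{m'\} \cup P_>$ with $S' \subset \{0, \ldots, m'-1\}$ of size $n - 1 - |P_>|$ and $k_i := m'$, and send $t \to \infty$. Now the $m > k_i$ terms dominate and the analogous dichotomy should force a positive $c_m$ among the non-positive set $\{m > m' : c_m \leq 0\}$, or else $c_{m'} \geq 0$. I expect the \emph{main obstacle} to lie precisely in this $t \to \infty$ step for non-polynomial entire $f$: the tail $\sum_{m > k_i,\, m \notin K} c_m q_m^2\, t^{2(m-k_i)}$ is itself an entire function of $t$ with infinitely many contributing terms, so there is no single ``largest'' index driving the asymptotic. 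For polynomial $f$ (the parenthetical case in~(2)) the top nonzero coefficient immediately provides the dominant term and the argument closes at once; for general entire $f$ I would try either to approximate by polynomial truncations of $f$ (which still preserve positivity on bounded subintervals $[0, \rho_0)$, reducing the question to part~(1) or to the polynomial case just treated) or to run a quantitative Phragm\'en--Lindel\"of-type asymptotic analysis of the tail to extract a negative leading behavior.
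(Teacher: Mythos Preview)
Your argument for part~(1) and the lower half of part~(2) is correct. The route is genuinely different from the paper's: rather than proving Lemma~\ref{Lhorn} directly, the paper derives it in Proposition~\ref{Pvariants} from Theorem~\ref{Tmain}, whose proof in turn rests on the determinantal identity in Theorem~\ref{Phorn} together with a L'H\^opital computation on $\det f[a\mathbf{1}_{n\times n}+t\bu\bu^T]$. Your quadratic-form approach $\bv^T f[\bu\bu^T]\bv=\sum_m c_m p_m^2$ with Vandermonde inversion is more elementary and self-contained; it isolates a single coefficient without any determinant machinery or Schur polynomials. (One small point: your $q_m\neq 0$ holds not just for generic $\bu_0$ but for every $\bu_0$ with distinct positive entries, since any $n$ distinct nonnegative-integer powers of such a vector are linearly independent.)

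For the upper half of part~(2), you correctly locate the obstacle but your proposed fixes miss the mark. Polynomial truncations of $f$ need not preserve positivity, and restricting to a bounded interval $[0,\rho_0)$ only yields information about low-degree coefficients via part~(1), not high-degree ones; a Phragm\'en--Lindel\"of argument is both vague and unnecessary here. The correct closure is elementary and is exactly what the paper does in Proposition~\ref{Pvariants}: under the contradiction hypothesis $|P_>|\leq n-1$, first rule out the case of infinitely many negative $c_m$ with $m>m'$ by noting that then (beyond the finitely many positive-coefficient degrees above $m'$) $f$ is dominated by a negative monomial, so $f(x)<0$ for large $x$ and $f[x\mathbf{1}_{n\times n}]\notin\bp_n(\R)$, a contradiction. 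With that case excluded, only finitely many $c_m$ with $m>m'$ are nonzero, so $f$ is a polynomial; at this point your own $t\to\infty$ argument closes immediately (every $m>m'$ outside $K$ has $c_m\leq 0$, and the largest nonzero one --- if any --- dominates and drives the sum to $-\infty$, while if none exists the limit is $c_{m'}<0$). The paper instead finishes via the reflection $g(x):=x^{m_0}f(1/x)$ and an appeal back to part~(1), but your direct limit is equally valid once the reduction to polynomial $f$ is made.
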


We further showed in \cite{KT} that these conditions are sharp, in that
every other nonzero Maclaurin coefficient of $f$ can be negative. The
conclusions of Lemma~\ref{Lhorn} are stronger than those of
Theorem~\ref{Thorn} -- albeit at $a=0$ and not at $a>0$ -- and they also
cover settings not covered in Examples~\ref{Exbddunbdd}: the case of
\textit{all} polynomial preservers, not merely ones with the initial
Maclaurin coefficients of orders $0, 1, \dots, n-1$.

\subsection{The main theorems}

We now propose a stronger version of the Horn--Loewner theorem that
addresses the positivity of the first $n$ nonzero Taylor coefficients at
a given point; and moreover, one that unifies Theorem~\ref{Thorn} and
Lemma~\ref{Lhorn}, which do not imply one another. This is our first main
result; in the sequel, we work with $f$ smooth on $[a, a+\epsilon)$, and
we refer to $f^{(k)}(a^+)$ as $f^{(k)}(a)$.

\begin{utheorem}[Horn--Loewner master theorem]\label{Tmain}
Let $0 \leq a < \infty, \epsilon \in (0,\infty), I = [a, a+\epsilon)$,
and let $f : I \to \R$ be smooth. Fix integers $n \geq 1$ and $0 \leq p
\leq q \leq n$, with $p=0$ if $a=0$, and such that $f(x)$ has $q-p$
nonzero derivatives at $x=a$ of order at least $p$. Now suppose that
\[
p \leq m_p < m_{p+1} < \cdots < m_{q-1}
\]
are the lowest orders (above $p$) of the first $q-p$ nonzero derivatives
of $f(x)$ at $x=a$.

Also fix pairwise distinct scalars $u_1, \dots, u_n \in (0,1)$, and let
$\bu := (u_1, \dots, u_n)^T$.
If $f[ a {\bf 1}_{n \times n} + t \bu \bu^T ] \in \bp_n( \R)$ for all $t
\in [ 0, \epsilon)$, then $f^{(k)}(a)$ is non-negative for $0 \leq k \leq
m_{q-1}$.
\end{utheorem}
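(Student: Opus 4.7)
The plan is to expand $f[A_t]$ with $A_t := a\mathbf{1}_{n \times n} + t\bu\bu^T$ as a Taylor series around $a$ in the variable $t$, and, for each index $i \in \{0, 1, \ldots, q-1\}$, to select a test vector $\bv_i \in \R^n$ that isolates the contribution of the monomial $t^{m_i}$ to the quadratic form $\bv_i^T f[A_t] \bv_i$. Positivity of this leading $t$-coefficient (forced by $f[A_t] \in \bp_n(\R)$ in the limit $t \to 0^+$) will then yield $f^{(m_i)}(a) \geqslant 0$. Concretely, since entry $(j,k)$ of $A_t$ equals $a + t u_j u_k$, Taylor's theorem at $a$ of order $m_{q-1}$ gives
\[
f[A_t] \;=\; \sum_{r=0}^{m_{q-1}} \frac{f^{(r)}(a)}{r!}\, t^r\, \bu_r \bu_r^T \;+\; R(t), \qquad \bu_r := (u_1^r, \ldots, u_n^r)^T,
\]
where the hypothesis on $f$ collapses the sum to the $q$ indices $r \in \{m_0, \ldots, m_{q-1}\}$ (all other derivatives up to order $m_{q-1}$ vanish by definition of the $m_j$'s), and the entries of the Lagrange remainder $R(t)$ are uniformly $O(t^{m_{q-1}+1})$ as $t \to 0^+$ because $f$ is smooth on $[a, a+\epsilon)$ and $u_j u_k < 1$.

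The conceptually central step is to produce, for each $i \in \{0, \ldots, q-1\}$, a vector $\bv_i \in \R^n$ satisfying $\bv_i^T \bu_{m_j} = 0$ for every $j \in \{0, \ldots, q-1\} \setminus \{i\}$ and $\bv_i^T \bu_{m_i} \neq 0$. This amounts to the linear independence of $\bu_{m_0}, \ldots, \bu_{m_{q-1}}$ in $\R^n$, which I would establish via Jacobi's bialternant formula: any $q \times q$ submatrix of the $n \times q$ matrix $[\bu_{m_0} \mid \cdots \mid \bu_{m_{q-1}}]$ is a generalized Vandermonde whose determinant factors as the ordinary Vandermonde $\prod_{k<l}(u_{j_l} - u_{j_k})$ times a Schur polynomial $s_\lambda(u_{j_1}, \ldots, u_{j_q})$. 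Both factors are strictly positive whenever the chosen $u_j$'s are distinct positive reals, so every such minor is nonzero. This is the natural entry point for the paper's Schur polynomial machinery (Theorem~\ref{Phorn}), and I would expect the author's treatment to package these $q$ separate test-vector arguments into a single clean determinantal identity rather than running them one at a time.

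With $\bv_i$ in hand, contracting the Taylor expansion by $\bv_i^T (\cdot) \bv_i$ annihilates every surviving monomial of degree $\leqslant m_{q-1}$ except the one at degree $m_i$, yielding
\[
\bv_i^T f[A_t]\, \bv_i \;=\; \frac{f^{(m_i)}(a)}{m_i!}\, t^{m_i}\, (\bv_i^T \bu_{m_i})^2 \;+\; O(t^{m_{q-1}+1}) \qquad (t \to 0^+).
\]
The left-hand side is non-negative because $f[A_t] \in \bp_n(\R)$ for all $t \in [0, \epsilon)$; dividing by $t^{m_i} > 0$ and letting $t \to 0^+$ forces $f^{(m_i)}(a)\, (\bv_i^T \bu_{m_i})^2 \geqslant 0$, so $f^{(m_i)}(a) \geqslant 0$. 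The remaining orders $k \in \{0, \ldots, m_{q-1}\} \setminus \{m_0, \ldots, m_{q-1}\}$ satisfy $f^{(k)}(a) = 0$ by hypothesis, giving non-negativity for every derivative up to order $m_{q-1}$. The main obstacle I anticipate is bookkeeping rather than conceptual: one must verify that the higher-order tail (derivatives of $f$ of order $> m_{q-1}$, potentially of either sign) genuinely remains $o(t^{m_i})$ after contracting against $\bv_i$, so that it cannot mask a negative value of $f^{(m_i)}(a)$. The smoothness of $f$ on the closed-at-$a$ interval $[a, a+\epsilon)$ together with the bound $u_j \in (0, 1)$ are precisely what make the uniform Lagrange remainder estimate carry through after contraction.
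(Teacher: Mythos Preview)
Your argument is correct and takes a genuinely different route from the paper's. The paper works with the determinant $\Delta(t) := \det f_\delta[a\mathbf{1}_{n\times n} + t\bu\bu^T]$ for the perturbation $f_\delta(x) := f(x) + \delta x^{p-1}$, invokes Theorem~\ref{Phorn} to identify the lowest nonvanishing Taylor coefficient of $\Delta$ at $t=0$ as (a positive multiple of) $\prod_{k} f_\delta^{(m_k)}(a)$, and then peels off one factor at a time by induction on $n$, using the $\delta$-perturbation to keep the already-handled factors strictly positive. Your approach instead contracts $f[A_t]$ against a single test vector $\bv_i$ per index, chosen orthogonal to every $\bu_{m_j}$ with $j \neq i$; this isolates the $t^{m_i}$-coefficient directly, with no induction, no perturbation, and no determinant. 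The only Schur-polynomial input you need is the positivity of generalized Vandermonde minors on distinct positive reals, which is strictly weaker than the full identity of Theorem~\ref{Phorn}. What the paper's route buys is precisely that identity and its consequences of independent interest (Theorem~\ref{Tsymm}, the generalized Cauchy expansion); what your route buys is a shorter and more elementary proof of Theorem~\ref{Tmain} itself.
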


In particular, at $p=0$ one obtains the following corollary for
any $a \geq 0$, which strengthens the conclusions of the Horn--Loewner
Theorem~\ref{Thorn} and of Theorem~\ref{Thankel} for smooth functions:

\begin{cor}\label{Czeroder}
Suppose that $a, \epsilon, I, f, n, \bu$ are as in Theorem~\ref{Tmain}.
If $f[ a {\bf 1}_{n \times n} + t \bu \bu^T ] \in \bp_n( \R)$ for all $t
\in [ 0, \epsilon)$, then the first $n$ (or fewer) nonzero derivatives of
$f(x)$ at $x=a$ are positive.
\end{cor}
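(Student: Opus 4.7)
The plan is to obtain Corollary~\ref{Czeroder} as an immediate specialization of the master Theorem~\ref{Tmain} with the choice $p=0$. Because the constraint ``$p=0$ if $a=0$'' in the master theorem never prohibits $p=0$ for any $a \geq 0$, this choice is available in every case covered by the corollary.

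The first step is to let $N \in \{0, 1, 2, \ldots, \infty\}$ denote the number of nonzero derivatives of $f$ at $x=a$. If $N=0$ there is nothing to prove, so assume $N \geq 1$ and set $q := \min(n, N) \in \{1, \ldots, n\}$. By construction the lowest $q$ orders
\[
0 \leq m_0 < m_1 < \cdots < m_{q-1}
\]
at which $f^{(k)}(a) \neq 0$ exist, and together with the ambient data $(a, \epsilon, I, f, n, \bu)$ and the matrix hypothesis on $f[a {\bf 1}_{n \times n} + t \bu \bu^T]$, this places us exactly in the setting of Theorem~\ref{Tmain} with $p=0$.

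Applying the master theorem then yields $f^{(k)}(a) \geq 0$ for every $0 \leq k \leq m_{q-1}$. Specializing to $k = m_j$ with $0 \leq j \leq q-1$, each such derivative is nonzero by the choice of $m_j$ and non-negative by Theorem~\ref{Tmain}, hence strictly positive; these are precisely the first $n$ (or all $N<n$) nonzero derivatives of $f$ at $x=a$, as claimed. No real obstacle arises in this deduction: all of the substance is packaged inside the master theorem, and the reduction uses only the triviality that a nonzero non-negative real number is positive.
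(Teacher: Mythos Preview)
Your proof is correct and follows exactly the approach the paper intends: the corollary is stated immediately after the remark that it follows from Theorem~\ref{Tmain} by setting $p=0$, and your argument (choosing $q=\min(n,N)$ and noting that nonzero non-negative derivatives are positive) spells out precisely the details that the paper leaves implicit and later makes explicit in the proof of Proposition~\ref{Pvariants}.
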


We make additional remarks about Theorem~\ref{Tmain} and its special
cases in Section~\ref{Sfurther}. For now, we note that it achieves
several objectives:
\begin{enumerate}
\item It unifies and further extends all of the above Horn--Loewner type
variants -- see Proposition~\ref{Pvariants} (hence, the term master
theorem).

\item Theorem~\ref{Tmain} yields more precise information than the
theorems stated above over $I = (0,\rho)$: about the derivatives of $f$
at each individual point $a > 0$ in the domain (rather than at all points
at once). The hypotheses employed are also local, which clarifies that
Theorem~\ref{Thorn} is not merely local but in fact a pointwise result.

\item By Corollary~\ref{Czeroder}, another strengthening accounts for the
zero derivatives at every point $a \geq 0$ -- and it is this
strengthening, whose sharpness (for $a=0$) we showed in \cite{KT} (see
the discussion following Lemma~\ref{Lhorn}).

\item An additional strengthening is when $a=0$ and $\rho < \infty$. In
this case, Theorem~\ref{Tmain} holds for all smooth functions, not merely
real analytic ones as in Lemma~\ref{Lhorn}(1).
\end{enumerate}

Theorem~\ref{Tmain} is proved using a novel connection between analysis
and symmetric function theory -- an explicit closed-form expression for
the derivatives of a certain determinant, which mixes calculus, matrix
algebra, and symmetric function theory. This is our second main theorem,
and it shows how Schur polynomials arise naturally in the entrywise
calculus, from considering (Cauchy-type) determinants for
\textit{arbitrary} sufficiently differentiable maps, not just polynomial
maps as in \cite{BGKP-fixeddim, KT} -- and at all $a \in \R$ (not just
$a=0$ as in \cite{BGKP-fixeddim, KT}):

\begin{utheorem}\label{Phorn}
Fix integers $n \geq 1$ and $0 \leq m_0 < m_1 < \cdots < m_{n-1}$, as
well as scalars $\epsilon > 0$ and $a \in \R$. Let $M := m_0 + \cdots +
m_{n-1}$ and let a function $f : [a, a+\epsilon) \to \R$ be $M$-times
differentiable at $a$ for some fixed $\epsilon > 0$. Fix vectors $\bu,
\bv \in \R^n$, and define $\Delta : [0,\epsilon') \to \R$ via:
\[
\Delta(t) := \det f[ a {\bf 1}_{n \times n} + t \bu \bv^T],
\]
for a sufficiently small $\epsilon' \in (0,\epsilon)$. Then,
\[
\Delta^{(M)}(0) = \sum_{\bm \vdash M} \binom{M}{m_0, m_1, \dots, m_{n-1}}
V(\bu) V(\bv) s_\bm(\bu) s_\bm(\bv) \prod_{k=0}^{n-1} f^{(m_k)}(a),
\]
where the first factor in the summand is the multinomial coefficient, and
we sum over all partitions $\bm = (m_{n-1}, \dots, m_0)$ of $M$, i.e., $M
= m_0 + \cdots + m_{n-1}$ and $m_{n-1} \geq \cdots \geq m_0$.

In particular, $\Delta(0) = \Delta'(0) = \cdots = \Delta^{(N-1)}(0) = 0$,
where $N = \binom{n}{2}$.
\end{utheorem}

In Section~\ref{S2}, we explain another application of
Theorem~\ref{Phorn}, to a symmetric function master identity -- see
Theorem~\ref{Tsymm} (in the $t$-adic topology; followed by
Corollary~\ref{Creal} in the real topology). We now explain the notation
required in Theorem~\ref{Phorn}. Our notation differs from that in the
literature~\cite{Macdonald}.

\begin{definition}[Schur polynomials, Vandermonde
determinants]\label{Dschur}\hfill
\begin{enumerate}
\item Given integers $m,N \geq 1$ and $0 \leq n'_0 \leq n'_1 \leq
\cdots \leq n'_{N-1}$, a \textit{column-strict Young tableau}, with shape
$\bn' := (n'_{N-1}, \dots, n'_0)$ and cell entries $1, 2, \dots, m$, is a
left-aligned two-dimensional rectangular array $T$ of cells, with $n'_0$
cells in the bottom row, $n'_1$ cells in the second lowest row, and so
on, such that:
\begin{itemize}
\item Each cell in $T$ has integer entry $j$ for some $j \in \{ 1, 2,
\dots, m \}$.
\item Entries weakly increase in each row, from left to right.
\item Entries strictly increase in each column, from top to bottom.
\end{itemize}

\item Given variables $u_1, u_2, \dots, u_m$ and a column-strict Young
tableau $T$, define its \textit{weight} to be
\[
{\rm wt}(T) := \prod_{j = 1}^m u_j^{f_j},
\]
where $f_j$ equals the number of cells in $T$ with entry $j$.

\item Given an increasing sequence of integers $0 \leq n_0 < \cdots <
n_{N-1}$, define the partitions/tuples
\[
\bn := (n_{N-1}, \dots, n_1, n_0), \qquad \bn_{\min} := (N-1, \dots, 1,
0),
\]
and the corresponding \textit{Schur polynomial} over $\bu := (u_1, u_2,
\dots, u_m)^T$ to be
\begin{equation}\label{Eschur}
s_\bn(\bu) := \sum_T {\rm wt}(T),
\end{equation}
where $T$ runs over all column-strict Young tableaux of shape $\bn' :=
\bn - \bn_{\min}$ with cell entries $1, 2, \dots, m$. By convention, we
set $s_\bn(\bu) := 0$ if $\bn$ does not have pairwise distinct
coordinates.

\item Given a vector $\bu = (u_1, \dots, u_m)^T$ with entries in a
commutative ring, we define its \emph{Vandermonde determinant} to be $1$
if $m=1$, and
\begin{equation}\label{Evandet}
V({\bf u}) := \prod_{1 \leq j < k \leq m} (u_k - u_j) = \det
\begin{pmatrix}
1 & u_1 & \cdots & u_1^{m-1} \\
1 & u_2 & \cdots & u_2^{m-1} \\
\vdots & \vdots & \ddots & \vdots \\
1 & u_n & \cdots & u_n^{m-1}
\end{pmatrix}, \qquad \text{if } m>1.
\end{equation}
\end{enumerate}
\end{definition}

The definition of Schur polynomials is due to Littlewood; notice that it
holds over the ground ring $\Z$, and hence over any commutative unital
ground ring. If $m=N$, then this definition is
equivalent~\cite{Macdonald} to Cauchy's definition of Schur polynomials:
\[
\det(\bu^{\circ n_0}\ |\ \bu^{\circ n_1} \ | \ \dots \ | \ \bu^{\circ
n_{N-1}})_{N \times N} = V(\bu) s_\bn(\bu).
\]
This is consistent with setting $s_\bn(\bu) = 0$ if $\bn$ has two equal
coordinates, since the left-hand matrix has two equal columns in that
case.

\begin{remark}\label{Rhorn}
From the proof of Theorem~\ref{Phorn} -- see the
expression~\eqref{Ederivative} and the discussion in the subsequent
paragraph -- it follows that if $f$ is smooth and has at most $n-1$
nonzero derivatives at $a$, then $\Delta^{(m)}(0) = 0\ \forall m \geq 0$.
\end{remark}

\begin{remark}
A curious point is that since one takes derivatives in
Theorem~\ref{Phorn}, one might expect one axis to deal with derivatives,
and the other to deal with the (anti-)symmetry in one set of variables --
but in fact, the final answer reveals the determinant's derivative to be
(anti-)symmetric in both sets of variables $\bu, \bv$.
\end{remark}

We close this section by returning to our starting point: the original
work of Loewner and Horn. We now explain how Theorem~\ref{Phorn} extends
a determinant computation by Loewner (in a~1967 letter, see footnote~3 --
and also found in Horn's 1967 thesis and paper \cite{horn}) in several
ways:
\begin{enumerate}
\item Loewner's computation was for $\bu = \bv$; Theorem~\ref{Phorn}
decouples the two vectors $\bu, \bv$.

\item Loewner showed Theorem~\ref{Phorn} only for $M \leq \binom{n}{2}$
(this is all that is required to obtain the original conclusions of the
Horn--Loewner theorem~\ref{Thorn}). In particular, Loewner showed that
for $M < \binom{n}{2}$ the derivative $\Delta^{(M)}(0) = 0$ vanishes;
this can be seen from Theorem~\ref{Phorn} via the pigeonhole principle
and since $s_\bm(\bu) = 0$ if $\bm$ has two equal coordinates. Moreover,
for $M = \binom{n}{2}$, there is a unique partition: $M = 0 + 1 + \cdots
+ (n-1)$, and for it the result has a simpler form since the Schur
polynomial factor is not manifested: $s_\bm(\bu)^2 = 1$.

\item In Proposition~\ref{Phorn-alg}, we generalize Loewner's computation
even further, to work over any commutative ground ring.
\end{enumerate}

\section{Theorem~\ref{Phorn}: from any smooth function to Schur
polynomials, to symmetric function identities}\label{S2}

In this section we prove Theorem~\ref{Phorn}, and use it to extend
classical symmetric function identities by Cauchy and Frobenius, as well
as modern variants in~\cite{BGKP-fixeddim,KT}, to a unifying master
identity; see Theorem~\ref{Tsymm}.


\begin{proof}[Proof of Theorem~\ref{Phorn}]
Let ${\bf w}_k$ denote the $k$th column of $a {\bf 1}_{n \times n} + t
\bu \bv^T$; thus ${\bf w}_k$ has $j$th entry $a + t u_j v_k$. To
differentiate $\Delta(t)$, we use the multilinearity of the determinant
and the Laplace expansion of $\Delta(t)$ into a linear combination of
$n!$ monomials, each of which is a product of $n$ terms $f(a + t u_j
v_k)$. By the product rule, taking the derivative yields $n$ terms from
each monomial, and one may rearrange all of these terms into $n$
clusters of terms (grouping by the column that gets differentiated),
and regroup using the Laplace expansion to obtain:
\[
\Delta'(t) = \sum_{k=1}^n \det (
f[{\bf w}_1] \ | \ \cdots \ | \
f[{\bf w}_{k-1}] \ | \
v_k \bu \circ f'[{\bf w}_k] \ | \
f[{\bf w}_{k+1}] \ | \ \cdots \ | \
f[{\bf w}_n] ).
\]

Now apply the derivative repeatedly, using this principle. By the Chain
Rule, $\Delta^{(M)}(0)$ is an integer linear combination of terms of the
form
\begin{align}\label{Ederivative}
\begin{aligned}
&\ \det (v_1^{m_0} \bu^{\circ m_0} \circ f^{(m_0)}[a {\bf 1}_{n \times
1}] \ | \ \cdots \ | \ v_n^{m_{n-1}} \bu^{\circ m_{n-1}} \circ
f^{(m_{n-1})}[a {\bf 1}_{n \times 1}])\\
= &\ \det (f^{(m_0)}(a) v_1^{m_0} \bu^{\circ m_0} \ | \ \cdots \ | \
f^{(m_{n-1})}(a) v_n^{m_{n-1}} \bu^{\circ m_{n-1}}),
\end{aligned}
\end{align}
where ${\bf 1}_{n \times 1} = (1,\dots,1)^T \in \R^n$, and 
$m_0 + \cdots + m_{n-1} = M$ with all $m_k \geq 0$.

From each such determinant, one may factor out the product
$\prod_{k=0}^{n-1} f^{(m_k)}(a)$. Now $\Delta^{(M)}(0)$ is obtained by
summing the determinants corresponding to applying $m_0, m_1, \dots,
m_{n-1}$ derivatives to the columns in some order, for all partitions
$\bm = (m_{n-1}, \dots, m_0)$ of $M$. We first compute the integer
multiplicity of each such determinant, noting by symmetry that these
multiplicities are all equal.
As we are applying $M$ derivatives to $\Delta$ (before evaluating at
$0$), the $m_0$ derivatives applied to get $f^{(m_0)}$ in some (fixed)
column can be any of $\binom{M}{m_0}$; now the $m_1$ derivatives applied
to get $f^{(m_1)}$ in a (different) column can be chosen in $\binom{M -
m_0}{m_1}$ ways; and so on. Thus, the multiplicity is
\begin{align*}
\binom{M}{m_0} \binom{M - m_0}{m_1} \cdots \binom{M - m_0 - \cdots -
m_{n-2}}{m_{n-1}} = &\ \frac{M!}{\prod_{k=0}^{n-1} m_k!}\\
= &\ \binom{M}{m_0,
m_1, \dots, m_{n-1}}.
\end{align*}

The next step is to compute the sum of all determinant terms. Each term
corresponds to a unique permutation of the columns $\sigma \in S_n$, with
say $m_{\sigma_k - 1}$ the order of the derivative applied to the $k$th
column $f[{\bf w}_k]$. Using~\eqref{Ederivative}, the determinant
corresponding to $\sigma$ equals
\begin{align*}
&\ \prod_{k=0}^{n-1} f^{(m_k)}(a) v_k^{m_{\sigma_k - 1}} \cdot
(-1)^\sigma \cdot \det( \bu^{\circ m_0} \ | \ \bu^{\circ m_1} \ | \ \cdots \
| \ \bu^{\circ m_{n-1}} )\\
= &\ V(\bu) s_\bm(\bu) \prod_{k=0}^{n-1} f^{(m_k)}(a) \cdot (-1)^\sigma
\prod_{k=0}^{n-1} v_k^{m_{\sigma_k - 1}}.
\end{align*}
Summing this term over all $\sigma \in S_n$ yields:
\begin{align*}
&\ V(\bu) s_\bm(\bu) \prod_{k=0}^{n-1} f^{(m_k)}(a) \sum_{\sigma \in S_n}
(-1)^\sigma \prod_{k=0}^{n-1} v_k^{m_{\sigma_k - 1}}\\
= &\ V(\bu) s_\bm(\bu) \prod_{k=0}^{n-1} f^{(m_k)}(a) \cdot \det (
\bv^{\circ m_0} \ | \ \bv^{\circ m_1} \ | \ \cdots \ | \ \bv^{\circ
m_{n-1}} )\\
= &\ V(\bu) s_\bm(\bu) \prod_{k=0}^{n-1} f^{(m_k)}(a) \cdot V(\bv)
s_\bm(\bv).
\end{align*}
Now multiply by the (common) integer multiplicity to complete the proof.
\end{proof} 

\subsection{General power series determinants and Schur polynomials}

We next present a novel application of Theorem~\ref{Phorn}, which unifies
and extends classical and modern determinantal identities. Begin by
recalling an identity for Cauchy's determinant \cite{Cauchy}: if $B$ is
the $n \times n$ matrix with entries $(1 - u_j v_k)^{-1} := \sum_{M \geq
0} (u_j v_k)^M$ for variables $u_j, v_k$ and $1 \leq j, k \leq n$, then
\begin{equation}\label{Ecauchy}
\det B = V(\bu) V(\bv) \sum_\bm s_\bm(\bu) s_\bm(\bv),
\end{equation}

\noindent where the notation was explained in Definition~\ref{Dschur},
and the sum runs over all partitions $\bm$ with at most $n$ parts. See
\cite[Chapter I.4, Example 6]{Macdonald} for a proof. Usually this
is written with infinitely many indeterminates $u_j, v_k$; we work with
$u_1, \dots, u_n; v_1, \dots, v_n$ by specializing the other variables to
zero. See also \cite[Section 5]{Ku} and the references therein, as well
as \cite{IIO, IOTZ, Kr1, Kr2, LLT, Macdonald} for other such identities
involving symmetric functions.

In particular, one can apply to all entries of the matrix $\bu \bv^T$
power series other than $f(x) = 1/(1-x) = \sum_{M \geq 0} x^M$, and then
compute the determinant. For instance, if $f(x)$ has fewer than $n$
monomials then $f[ \bu \bv^T ]$ is a sum of fewer than $n$ rank-one
matrices, so it is singular. The case when $f$ has precisely $n$ or $n+1$
monomials was crucially used in joint works \cite{BGKP-fixeddim,KT}, to
study entrywise positivity preservers. Another explicit formula --
see~\eqref{Eschlosser} -- was shown by Frobenius \cite{Fr} in greater
generality than~\eqref{Ecauchy}. This formula also appears in
Rosengren--Schlosser \cite[Corollary 4.7]{RS}, with $(1-cx)/(1-x)$ in
place of $1/(1-x)$ as in~\eqref{Ecauchy}. In this case, one has:
\begin{align}\label{Eschlosser}
&\ \det \left( \frac{1 - c u_j v_k}{1 - u_j v_k} \right)_{j,k=1}^n\\
= &\ V(\bu) V(\bv) (1-c)^{n-1} \left( \sum_{\bm\; :\; m_0 = 0} s_\bm(\bu)
s_\bm(\bv) + (1-c) \sum_{\bm\; :\; m_0 > 0} s_\bm(\bu) s_\bm(\bv)
\right).
\notag
\end{align}

With this background, we can state the master identity that extends
Equations~\eqref{Ecauchy} and~\eqref{Eschlosser} from $(1-cx)/(1-x)$ for
a scalar/parameter $c$, to all power series -- including arbitrary
polynomials -- and with an additional $\Z^{\geq 0}$-grading:

\begin{theorem}\label{Tsymm}
Fix a commutative unital ring $R$ and let $t$ be an indeterminate. Let
$f(t) := \sum_{M \geq 0} f_M t^M \in R[[t]]$ be an arbitrary formal power
series. Given vectors $\bu, \bv \in R^n$ for some $n \geq 1$, and with
the notation in Definition~\ref{Dschur}, we have:
\begin{equation}\label{Esymm}
\det f[ t \bu \bv^T ] = V(\bu) V(\bv) \sum_{M \geq \binom{n}{2}} t^M
\sum_{\bm = (m_{n-1}, \dots, m_0) \; \vdash M} s_\bm(\bu) s_\bm(\bv)
\prod_{k=0}^{n-1} f_{m_k}.
\end{equation}
\end{theorem}

\noindent All of the aforementioned examples are special cases of the
$t=1$ case of Theorem~\ref{Tsymm}.

Multiple approaches can be used to show Theorem~\ref{Tsymm}; we present
one approach and outline another.
We begin by formulating Theorem~\ref{Phorn} in greater generality,
algebraically. Fix a commutative (unital) ring $R$ and an $R$-algebra
$S$. The first step is to formalize the notion of the derivative, on a
sub-class of $S$-valued functions. This is more than just the commonly
used notion of a \textit{derivation}, so we give it a different name.

\begin{definition}\label{Ddiffcalc}
Given a commutative unital ring $R$, a commutative $R$-algebra $S$ (with
$R \subset S$), and an $R$-module $X$, a \textit{differential calculus}
is a pair $(A, \partial)$, where $A$ is an $R$-subalgebra of functions $:
X \to S$ (under pointwise addition and multiplication and $R$-action)
that contains the constant functions, and $\partial : A \to A$ satisfies
the following properties:
\begin{enumerate}
\item $\partial$ is $R$-linear:
\[
\partial \sum_j r_j f_j = \sum_j r_j \partial f_j, \qquad \forall r_j \in
R, \ f_j \in A, \ \forall j.
\]

\item $\partial$ is a derivation (product rule):
\[
\partial(fg) = f \cdot (\partial g) + (\partial f) \cdot g, \qquad
\forall f,g \in A.
\]

\item $\partial$ satisfies a variant of the Chain Rule for composing with
linear functions: if $x' \in X, r \in R$, and $f \in A$, then the
function $g : X \to S, \ g(x) := f(x' + rx)$ also lies in $A$, and
moreover,
\[
(\partial g)(x) = r \cdot (\partial f)(x' + rx).
\]
\end{enumerate}
\end{definition}

For example, the algebra of smooth functions from the real line to itself
is a differential calculus, with $R = S = X = \R$ and $\partial = d/dx$.

We can now state an algebraic generalization of Loewner's calculations.
The proof is essentially the same as for Theorem~\ref{Phorn}, and is
hence omitted.

\begin{prop}\label{Phorn-alg}
Suppose $R, S, X$ are as in Definition~\ref{Ddiffcalc}, with an
associated differential calculus $(A,\partial)$. Fix an integer $n>0$,
two vectors $\bu, \bv \in R^n$, a vector $a \in X$, and a function $f \in
A$; and $\Delta : X \to R$ via:
\[
\Delta(t) := \det f[ a {\bf 1}_{n \times n} + t \bu \bv^T], \quad t \in
X.
\]
Then,
\[
(\partial^M \Delta)(0_X) = \sum_{\bm \vdash M} \binom{M}{m_0, m_1, \dots,
m_{n-1}} V(\bu) V(\bv) s_\bm(\bu) s_\bm(\bv) \prod_{k=0}^{n-1}
(\partial^{m_k} f)(a),
\]
where we sum over all partitions $\bm = (m_{n-1}, \dots, m_0)$ of $M$. In
particular,
\[
\Delta(0_X) = (\partial \Delta)(0_X) = \cdots =
(\partial^{\binom{n}{2}-1} \Delta)(0_X) = 0_R.
\]
\end{prop}

The algebra $A$ is supposed to remind the reader of smooth functions. One
can instead work with an appropriate algebraic notion of $M$-times
differentiable functions in order to generalize Theorem~\ref{Phorn} to a
finite degree of differentiability.

Proposition~\ref{Phorn-alg} helps provide one approach to proving the
master identity:

\begin{proof}[Proof of Theorem~\ref{Tsymm}]
The idea is to apply Proposition~\ref{Phorn-alg} to the differential
calculus
\[
X := t \; R[[t]], \quad S = A := R[[t]],
\]
where $f(t) \in A$ acts on $g(t) \in X$ by composition: $f(g(t))$. We set
$a=0$ here, and the composition converges in the $t$-adic topology by
choice of $X$. Since $a=0$, we have $\det f[ t \bu \bv^T ] = \Delta(t)$.
The problem in proceeding thus is that one needs to clear denominators
and work with $\Delta^{(M)}(0_R) / M!$ and $f_{m_k} = f^{(m_k)}(0_R) /
m_k!$, the Maclaurin coefficients of $\Delta$ and $f$ respectively; but
to work with these requires $R$ to have characteristic zero.

Thus, we begin by observing that the identity~\eqref{Esymm} is of a
universal nature: if it holds for the polynomial ring
\[
R = \Z [ X_1, \dots, X_n, \ Y_1, \dots, Y_n, \ Z_0, Z_1, \dots ]
\]
with $u_j = X_j, \ v_k = Y_k, \ f_m = Z_m\ \forall j,k,m$ algebraically
independent elements, then one may specialize to any given ground ring
$R$ -- or more precisely, to the subring of $R$ generated by $1, \ u_1,
\dots, u_n, \ v_1, \dots, v_n, \ f_0, f_1, \dots$. Hence we assume in the
rest of the proof that
\[
R = \Z[ u_1, \dots, u_n, \ v_1, \dots, v_n , \ f_0, f_1, \dots],
\]
with $u_j, v_k, f_m$ being algebraically independent.

We first work over a slightly larger ring:
\[
R' := R \otimes_\Z \mathbb{Q} = \mathbb{Q}[ \{ u_j, v_j : 1 \leq j \leq
n, \ f_m : m \geq 0 \}].
\]
In this setting, apply Proposition~\ref{Phorn-alg} with $X := t \;
R'[[t]]$, $S = A := R' [[ t ]]$, and $a := 0_{R'}$, and define $\partial
: A \to A$ to be the usual derivative:
\[
\partial \sum_{M \geq 0} g_M t^M := \sum_{M \geq 1} M g_M t^{M-1} \qquad
(g_M \in R,\ \forall M \geq 0).
\]
One verifies that $(A, \partial)$ is a differential calculus for the data
$(R',S,X)$.

We now prove the result over $R'$. Notice that $\Delta(t) = \det f[ t \bu
\bv^T ]$ is a linear combination of finite products of elements of
$R'[[t]]$, hence lies in $R'[[t]]$. If $\Delta(t) = \sum_{M \geq 0}
\delta_M t^M$, then by Proposition~\ref{Phorn-alg} one can compute each
of its Maclaurin coefficients $\delta_M \in R'$ via: $\delta_M =
\frac{(\partial^M \Delta)(0_{R'})}{M!}$, and hence each of its
Taylor--Maclaurin polynomials as well. Taking limits in the $t$-adic
topology, and recalling from Proposition~\ref{Phorn-alg} that $\delta_M =
0_{R'}$ for $M < \binom{n}{2}$, we compute:
\begin{align*}
\Delta(t) = &\ \sum_{M \geq \binom{n}{2}} t^M \frac{(\partial^M
\Delta)(0_{R'})}{M!} \\
= &\ \sum_{M \geq \binom{n}{2}} t^M \sum_\bm V(\bu) V(\bv) s_\bm(\bu)
s_\bm(\bv) \prod_{k=0}^{n-1} \frac{(\partial^{m_k} f)(0_{R'})}{m_k!},
\end{align*}
and this concludes the proof for $R' = \mathbb{Q}[u_1, \dots, u_n, \ v_1,
\dots, v_n, \ f_0, f_1, \dots]$.

While we just showed the identity~\eqref{Esymm} over $R'$, here both
sides of~\eqref{Esymm} belong to $R[[t]]$, where $R = \Z[\{ u_j, v_j : 1
\leq j \leq n, \ f_m : m \geq 0 \}]$. By our discussion on universality,
the result follows for a general commutative unital ring.
\end{proof}

We also sketch an alternative approach to proving Theorem~\ref{Tsymm},
via matrix calculus. In the $t$-adic topology, $f(t) = \lim_{M \to
\infty} f_{\leq M}(t)$, where $f_{\leq M}(t)$ is the $M$th
Taylor--Maclaurin polynomial of $f$ for $M \geq 0$, given by $f_{\leq
M}(t) := \sum_{m=0}^M f_m t^m$. But for $f_{\leq M}$ we have an explicit
matrix factorization:
\begin{align*}
&\ f_{\leq M} [ t \bu \bv^T ]\\
= &\ \begin{pmatrix}
1 & u_1 & \cdots & u_1^M \\
1 & u_2 & \cdots & u_2^M \\
\vdots & \vdots& \ddots & \vdots \\
1 & u_n & \cdots & u_n^M
\end{pmatrix}
\cdot
\begin{pmatrix}
f_0 & 0 & \cdots & 0 \\
0 & f_1 t & \cdots & 0 \\
\vdots & \vdots & \ddots & \vdots \\
0 & 0 & \cdots & f_M t^M
\end{pmatrix}
\cdot
\begin{pmatrix}
1 & v_1 & \cdots & v_1^M \\
1 & v_2 & \cdots & v_2^M \\
\vdots & \vdots& \ddots & \vdots \\
1 & v_n & \cdots & v_n^M
\end{pmatrix}^T
\end{align*}
and hence one can compute $\det f_{\leq M}[ t \bu \bv^T ]$ via the
Cauchy--Binet formula. Now take the $t$-adic limit and rearrange terms to
deduce via the $t$-adic continuity of the determinant function:
\begin{equation}\label{Ecauchybinet}
\det f[t \bu \bv^T] = V(\bu) V(\bv) \sum_{0 \leq m_0 < m_1 < \cdots <
m_{n-1}} s_\bm(\bu) s_\bm(\bv) \prod_{k=0}^{n-1} f_{m_k} t^{m_k},
\end{equation}
where $\bm = (m_{n-1}, \dots, m_0)$. But this sum equals the right-hand
side of~\eqref{Esymm}.

We conclude this section by returning to the real topology and working
again over $\R$. As Theorem~\ref{Tsymm} holds in the $t$-adic topology,
it is natural to ask about the convergence of the series~\eqref{Esymm} as
a real function. This indeed holds, as a consequence of
Theorem~\ref{Phorn}:

\begin{cor}\label{Creal}
Fix scalars $\epsilon > 0$ and $a \in \R$, and vectors $\bu, \bv \in
\R^n$ for some integer $n>0$. If $f : [a, a+\epsilon) \to \R$ has the
power series expansion
\[
f(x) = \sum_{M \geq 0} f_M (x-a)^M, \qquad x \in [a, a + \epsilon),
\ f_M \in \R,
\]
and we define $\Delta(t) := \det f[ a {\bf 1}_{n \times n} + t \bu \bv^T
]$ for sufficiently small $t$, then $\Delta(t)$ equals the right-hand
side of Equation~\eqref{Esymm} for sufficiently small $t$.
\end{cor}

\begin{proof}
If $f$ has a power series expansion around/near $a$, then so does
$\Delta$ near $0$, since it is a linear combination of finite products of
$f$-values near $a$. Thus $\Delta$ (real analytic near $0$) can be
recovered from its Maclaurin coefficients by repeating the same
computation as in the proof of Theorem~\ref{Tsymm}. The Maclaurin
coefficients of $\Delta$ are computed in Theorem~\ref{Phorn}.
\end{proof}

\section{Horn--Loewner master theorem~\ref{Tmain}: additional remarks,
and proof}\label{Sfurther}

Finally, we return to the Horn--Loewner theorem~\ref{Thorn} and its
variants. The goal of this section is to prove our Master
Theorem~\ref{Tmain}; however, we begin by filling in the details omitted
in the Introduction.

One reason why the Horn--Loewner theorem~\ref{Thorn} (and its variants)
was significant was because it is sharp in the number of non-negative
derivatives. We now provide several examples of this phenomenon.

\begin{example}\label{Exbddunbdd}\hfill
\begin{enumerate}
\item If one restricts to the class of power functions $f(x) = x^\alpha$
-- with $\alpha$ possibly non-integral -- then FitzGerald and Horn
\cite{FitzHorn} showed that such entrywise preservers of positivity on
$\bp_n((0,\infty))$ correspond precisely to $\alpha \in \Z^{\geq 0} \cup
[n-2,\infty)$ -- note that the case of $\alpha \in \Z^{\geq 0}$ follows
immediately from the Schur product theorem \cite{Schur1911}. Thus if
$\alpha \in (n-2, n-1)$, then $f(x) = x^\alpha$ satisfies
Theorem~\ref{Thorn}; moreover, $f^{(n)}$ is negative on $I = (0,\infty)$,
showing that Theorem~\ref{Thorn} is sharp.

\item Let $I = (-\rho, \rho)$ or $[0,\rho)$ with $0 < \rho < \infty$. If
one restricts to polynomial functions $f(x) = \sum_{k \geq 0} c_k x^k$
acting on $I$, then in~\cite{BGKP-fixeddim} we showed that for any
scalars $c_0, \dots, c_{n-1} > 0$, there exists $c_n < 0$ such that $f(x)
= \sum_{k=0}^n c_k x^k$ preserves positivity on $\bp_n(I)$; moreover,
$f^{(n)}(0) = n! \; c_n < 0$, hence $f^{(n)}(x) < 0$ for $x>0$ small.
This has two consequences: first, it produces the first examples of
polynomials / power series with a negative coefficient that preserve
positivity in a fixed dimension. Moreover, this produces polynomial
functions (the previous example produces power functions) that preserve
positivity in dimension $n$ but not $n+1$.

\item Suppose that $I = (0,\infty)$. In~\cite{KT}, we constructed
polynomials of the special form $\sum_{k=0}^{2n} c_k x^k$ with $c_n < 0 <
c_k$ for all $k \neq n$, which entrywise preserve positivity on
$\bp_n((0,\infty))$. These are the first examples with negative
coefficients, and in particular, the first polynomial examples that work
over $\bp_n(I)$ but not over $\bp_{n+1}(I)$. The following proposition
provides an explicit family of such polynomials -- in fact, the simplest
such family -- when $n=2$. This is followed by a remark about all
polynomials of the special form above when $n=2$; the case of general $n
\geq 2$ is in~\cite{KT}.
\end{enumerate}
\end{example}

\begin{prop}\label{Pexample}
For $\varepsilon > 0$, define $f_\varepsilon(x) = 1 + x - \varepsilon x^2
+ x^3 + x^4$. If $0 < \varepsilon \leq \frac{1}{10}$, then
$f_\varepsilon[-]$ preserves positivity on $\bp_2([0,\infty))$.
\end{prop}

In particular, such entrywise polynomials $f_\varepsilon[-]$ preserve
positivity on $\bp_2((0,\infty))$, but not on $\bp_3((0,\rho))$ for any
$0 < \rho \leq \infty$, by Lemma~\ref{Lhorn}.

\begin{proof}
We first claim that $f_\varepsilon$ is positive on $(0,\infty)$ if
$\varepsilon \leq 2$. This follows by adding the inequalities $x^2 < 1 +
x^4$ and $x^2 < x + x^3$, for any $x > 0$.

Next suppose that $\bu = (u_1, u_2)^T \in (0,\infty)^2$ is a column and
$A = \bu \bu^T \in \bp_2((0,\infty))$ has rank one. We claim that
$f_\varepsilon[A]$ is positive semidefinite if $0 < \varepsilon \leq
\frac{1}{10}$. By relabelling indices and continuity, it suffices to show
this for $0 < u_1 < u_2$. First note that all entries of
$f_\varepsilon[\bu \bu^T]$ are non-negative, by the preceding paragraph.
Next, we apply~\eqref{Ecauchybinet} -- with $n = 2$, $\bv = \bu$, $t=1$,
and $f = f_\varepsilon$ -- to compute the determinant:
\begin{align}\label{Edet}
&\ \det f_\varepsilon[ \bu \bu^T ]\\
= &\ V(\bu)^2 \left[ \sum_{0 \leq j < k \leq
3} s_{(n_k, n_j)}(\bu)^2 - \varepsilon \sum_{j=0}^1 s_{(M,n_j)}(\bu)^2 -
\varepsilon \sum_{j=2}^3 s_{(n_j,M)}(\bu)^2 \right], \notag
\end{align}
where, for ease of exposition, we define and work with
\begin{equation}
(n_0, n_1, M, n_2, n_3) := (0,1,2,3,4).
\end{equation}

To show that $\det f_\varepsilon[ \bu \bu^T ]$ is non-negative, we employ
bounds on Schur polynomials in two variables. Given integers $0 \leq j <
k$, notice that
\[
s_{(k,j)}(u_1,u_2) = \frac{u_2^k u_1^j - u_2^j u_1^k}{u_2 - u_1} = (u_1
u_2)^j (u_2^{k - j - 1} + u_2^{k - j - 2} u_1 + \cdots + u_1^{k - j -
1}).
\]
If $0 < u_1 < u_2$ as above, then the largest among the $k - j$ monomials
here is $u_1^j u_2^{k - 1}$, so
\begin{equation}\label{Eschurbound}
u_1^j u_2^{k - 1} \leq s_{(k, j)}(u_1, u_2) \leq (k - j) u_1^j u_2^{k -
1}, \qquad \forall 0 < u_1 < u_2, \ \ 0 \leq j < k.
\end{equation}
(See \cite[Proposition~3.1]{KT} for the corresponding bounds on general
Schur polynomials.) In particular, dividing~\eqref{Edet} by $V(\bu)^2 >
0$ and using~\eqref{Eschurbound}, we have:
\[
\frac{\det f_\varepsilon[\bu \bu^T]}{V(\bu)^2} \geq \sum_{0 \leq j < k
\leq 3} u_1^{2n_j} u_2^{2(n_k - 1)} - \varepsilon \sum_{0 \leq j \leq 3}
(M - n_j)^2 u_1^{2\min(n_j, M)} u_2^{2\max(n_j,M)-2}
\]

We claim that the right-hand side is non-negative when $0 < \varepsilon
\leq \frac{1}{10}$. This is equivalent to showing that if $0 < u_1 <
u_2$, then
\[
\varepsilon^{-1} \geq 10 = \sum_{j=0}^3 (M - n_j)^2 \geq
\frac{\displaystyle \sum_{0 \leq j \leq 3} (M -
n_j)^2 u_1^{2\min(n_j, M)} u_2^{2\max(n_j,M)-2}}{\displaystyle \sum_{0
\leq j < k \leq 3} u_1^{2n_j} u_2^{2(n_k - 1)}}.
\]
For this, it suffices to show that each monomial $u_1^{2\min(n_j, M)}
u_2^{2\max(n_j,M)-2}$ in the numerator is bounded above by the sum in the
denominator. We claim more strongly that each such monomial is bounded
above by one summand in the denominator. This is because for each $j$,
there exist indices $i \neq k$ in $\{ 0, 1, 2, 3 \} \setminus \{ j \}$
such that $n_i < M < n_k$. Now for instance if $j=1$, then
\[
u_1^{2 n_1} u_2^{2M - 2} \leq \quad \begin{cases}
u_1^{2 n_0} u_2^{2(n_1 - 1)}, & \text{if } u_2 < 1,\\
u_1^{2 n_1} u_2^{2(n_3 - 1)}, & \text{if } u_2 \geq 1,
\end{cases}
\]
and similarly for the other three summands.

This shows the result for all rank-one matrices in $\bp_2((0,\infty))$.
For the general case, we use the extension principle
\cite[Theorem~3.5]{KT}, which says that if $f : (0,\infty) \to \R$ is a
continuously differentiable function such that $f[-]$ preserves
positivity on rank-one matrices in $\bp_n((0,\infty))$ and $f'[-]$
preserves positivity on $\bp_{n-1}((0,\infty))$ for some $n \geq 2$, then
$f[-]$ preserves positivity on $\bp_n((0,\infty))$. Via this result, it
suffices to show that $f'_\varepsilon[-]$ preserves positivity on
$\bp_1((0,\infty))$ if $0 < \varepsilon \leq \frac{1}{10}$. But this
follows by noting that
\[
f'_\varepsilon[(x)_{1 \times 1}] = f'_\varepsilon(x) \geq f'_{1/10}(x) >
(1 - \frac{x}{10})^2 \geq 0,\qquad \forall x > 0.
\]
Thus if $0 < \varepsilon \leq \frac{1}{10}$, then $f_\varepsilon[-]$
preserves positivity on $\bp_2((0,\infty))$, so on $\bp_2([0,\infty))$ by
continuity.
\end{proof}

\begin{remark}
In this remark, we consider the more general family of polynomials
\[
f_\varepsilon(x) = c_{n_0} x^{n_0} + c_{n_1} x^{n_1} - \varepsilon x^M +
c_{n_2} x^{n_2} + c_{n_3} x^{n_3}, \qquad \varepsilon > 0
\]
where $0 \leq n_0 < n_1 < M < n_2 < n_3$ are integers, and $c_{n_j} \in
(0,\infty)$ are fixed scalars. If
\begin{equation}
0 < \varepsilon \leq \varepsilon_0 := \min \{ c_{n_j} : 0 \leq j \leq 3
\} \cdot \left( \sum_{j=0}^3 (M - n_j)^2 \right)^{-1}
\end{equation}
then we claim that $f_\varepsilon[-]$ preserves positivity on
$\bp_2([0,\infty))$. Here we outline the argument when it differs from
the proof of Proposition~\ref{Pexample}, mainly due to the integers $n_j
\geq 0$ being arbitrary.

The first reduction is to the case when all $c_{n_j} = 1$. Note that for
any positive semidefinite matrix $A_{n \times n}$,
one has $f_\varepsilon[A] - g_\varepsilon[A] \in \bp_n$,
where $g_\varepsilon(x) := \min_j c_{n_j} \cdot \sum_{j=0}^3 x^{n_j} -
\varepsilon x^M$. This is because $A^{\circ n_j} \in \bp_n$ for all $j$,
by the Schur product theorem \cite{Schur1911}. Thus if the result holds
for $(\min_j c_{n_j})^{-1} g_\varepsilon(x)$ -- with the bound
$\varepsilon^{-1} \geq \sum_{j=0}^3 (M - n_j)^2$ -- then the result holds
for $f_\varepsilon$ as desired.

Thus, suppose that $c_{n_j} = 1\ \forall j$ and $f_\varepsilon(x) =
x^{n_0} + x^{n_1} - \varepsilon x^M + x^{n_2} + x^{n_3}$, akin to its
special case in Proposition~\ref{Pexample}. Now $\varepsilon_0 \leq
\frac{1}{10}$, so the same arguments as in the preceding proof show:
(i)~$f_\varepsilon(x) > 0$ if $0 < \varepsilon \leq 2$ and $x>0$; and
(ii)~$f_\varepsilon[-]$ preserves positivity on rank-one matrices in
$\bp_2((0,\infty))$, if $0 < \varepsilon \leq \varepsilon_0$.
The final step -- as in the preceding proof -- is to use the extension
principle, and here we claim the stronger assertion:
$h_{\varepsilon_0}(x) := n_1 x^{n_1 - 1} - M \varepsilon_0 x^{M-1} + n_2
x^{n_2 - 1} \geq 0\ \forall x > 0$.
To prove this, it suffices to multiply by $x>0$ and show that $n_1
(x^{n_1} + x^{n_2}) \geq M \varepsilon_0 x^M$. This would follow from $M
\varepsilon_0 \leq n_1$, which in turn follows from the sub-claim that
\[
2 (M - n_1)^2 \geq M / n_1, \quad \text{where} \quad M \geq n_1 + 1, \ n_1
\geq 1.
\]
This sub-claim is proved by considering the function $h(x) := 2(x -
n_1)^2 - n_1^{-1} x$ on $[n_1,\infty)$. Note that $h'(x) \geq 0$ for $x
\geq n_1 + 1/(4 n_1)$, so $h(M) \geq h(n_1 + 1) = 1 - n_1^{-1} \geq 0$.
\qed
\end{remark}

Our next observation explains why we call Theorem~\ref{Tmain} a master
theorem: it encompasses all of the previously known versions.

\begin{prop}\label{Pvariants}
Theorem~\ref{Tmain} specializes to all results stated prior to it, for
$f$ smooth.
\end{prop}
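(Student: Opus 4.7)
The plan is to specialize Theorem~\ref{Tmain} (and its Corollary~\ref{Czeroder}) to each of the prior results by a suitable choice of parameters $(a,p,q,\bu,\epsilon)$. As noted in the paragraph preceding Theorem~\ref{Thankel}, the passage from merely continuous (or measurable) hypotheses to smooth ones is standard via mollifiers and a theorem of Boas--Widder; so it suffices to treat the smooth case, which is what the proposition requires.

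For Theorems~\ref{Thorn} and~\ref{Thankel}: fix any $a$ in the relevant domain ($a>0$ for Thorn, $a\geq 0$ for Thankel), and any distinct $u_1,\dots,u_n\in(0,1)$. In Thorn, $\bu$ can be chosen freely; in Thankel, I would take $\bu$ to be a small positive scalar multiple of $(1,u_0,u_0^2,\dots,u_0^{n-1})^T$ whose entries lie in $(0,1)$, which only reparametrizes the parameter $t$ by a positive factor. In each case the matrices $a\,\mathbf{1}_{n\times n}+t\bu\bu^T$ lie in $\bp_n(I)$ for $t$ in some $[0,\epsilon)$, so the hypothesis of the earlier theorem supplies the hypothesis of Theorem~\ref{Tmain}. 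Corollary~\ref{Czeroder} then shows that the first $n$ (or fewer) nonzero derivatives $f^{(k)}(a)$ are positive. Since the nonzero orders $m_0<m_1<\cdots$ satisfy $m_i\geq i$, any $k\leq n-1$ with $f^{(k)}(a)\neq 0$ is of the form $k=m_i$ with $i\leq n-1$ and is therefore positive, while the remaining $f^{(k)}(a)$ vanish. Hence $f^{(k)}(a)\geq 0$ for $0\leq k\leq n-1$, and varying $a$ recovers the smooth versions of Theorems~\ref{Thorn} and~\ref{Thankel}.

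For Lemma~\ref{Lhorn}(1) I would take $a=0$, so that the rank-one matrices $t\bu\bu^T$ already supply the hypothesis of Theorem~\ref{Tmain}. Let $m_0<m_1<\cdots$ enumerate the indices of nonzero Maclaurin coefficients and set $j:=|\{i:m_i<m'\}|$. If $j<n$, then $m_j=m'$, and Theorem~\ref{Tmain} applied with $q=j+1\leq n$ would force $c_{m'}>0$, contradicting $c_{m'}<0$. Hence $j\geq n$, and Tmain with $q=n$ gives the desired $n$ positive coefficients $c_{m_0},\dots,c_{m_{n-1}}$ with indices strictly below $m'$.

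For Lemma~\ref{Lhorn}(2) the "$n$ positive $c_m$ with $m<m'$" statement follows exactly as in part~(1) applied with $I=[0,\rho')\subset[0,\infty)$. For the "$n$ positive $c_m$ with $m>m'$" statement in the polynomial case (which gives the "in particular" conclusion), I would use the reversal $g(x):=x^N f(1/x)=\sum_{k=0}^N c_{N-k}x^k$ with $N=\deg f$; a brief diagonal-similarity computation shows $g$ preserves positivity on rank-one matrices with positive entries, and by continuity on all rank-one matrices in $\bp_n([0,\infty))$. Applying part~(1) to $g$ at its negative coefficient (sitting at index $N-m'$) translates back to the claim for $f$. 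The step I expect to be most delicate is the non-polynomial case of the "$m>m'$" claim: here the reversal $x^N f(1/x)$ is no longer a well-defined power series, so it must be replaced by a truncation and limiting procedure exploiting the convergence of $f$ on all of $[0,\infty)$ --- this is the main obstacle in the proof, and is where one has to work beyond what is provided by a direct specialization of Tmain.
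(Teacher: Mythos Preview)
Your handling of Theorems~\ref{Thorn}, \ref{Thankel}, and Lemma~\ref{Lhorn}(1) is correct and matches the paper in substance. One cosmetic difference: for Theorem~\ref{Thankel} the paper takes $p=q=n$ (so $m_k=k$ and the conclusion $f^{(k)}(a)\ge 0$ for $k\le n-1$ is immediate), whereas you route through Corollary~\ref{Czeroder} ($p=0$) and then observe that $m_i\ge i$ forces every $k\le n-1$ with $f^{(k)}(a)\ne 0$ to lie among the first $n$ nonzero orders. Both are valid; your rescaling of $\bu$ to land in $(0,1)$ is a reasonable precaution.

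The genuine gap is in Lemma~\ref{Lhorn}(2). You have correctly located the obstacle --- the reversal $x^N f(1/x)$ is unavailable when $f$ is not a polynomial --- but your proposed ``truncation and limiting procedure'' is not how the paper proceeds, and it is unclear how such a limit would retain the needed strict sign information (truncating a positivity preserver need not give one). The missing idea is a \emph{reduction to the polynomial-tail case}. Arguing by contradiction that $c_{m'}<0$ is followed by fewer than $n$ positive higher-degree coefficients, the paper first rules out infinitely many \emph{negative} coefficients above $m'$: if there were infinitely many, then (since only finitely many are positive) one bounds $f(x)$ above by a polynomial plus a single negative term $c_q x^q$ of arbitrarily large degree, forcing $f(x)<0$ for $x\gg 0$ and hence $f[x\,{\bf 1}_{n\times n}]\notin\bp_n(\R)$. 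Thus only finitely many coefficients above $m'$ are nonzero. Now replace $m'$ by the \emph{largest} index with a negative coefficient; above it there are finitely many (all positive) nonzero terms, so $g(x):=x^{m_0}f(1/x)$ (with $m_0$ the top nonzero degree) is a genuine polynomial, preserves positivity on rank-one matrices in $\bp_n((0,\infty))$ by the same Schur-product computation you wrote, and part~(1) applied to $g$ gives $\ge n$ positive coefficients of degree below $m_0-m'$ in $g$, i.e.\ $\ge n$ positive coefficients of degree above $m'$ in $f$ --- the desired contradiction.
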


\begin{proof}
We first show how Theorem~\ref{Tmain} implies Theorem~\ref{Thankel}
(which in turn implies Theorem~\ref{Thorn}). Since $f$ is smooth, by the
discussion following Theorem~\ref{Thankel} we may disregard the
hypothesis concerning $\bp_2(I)$. Choose any $a \in I = (0,\rho)$, and
set
\[
\epsilon := \rho - a, \qquad p = q := n,
\qquad u_k := u_0^{k-1}\ \forall k \in [1,n].
\]
Theorem~\ref{Thankel} now follows from Theorem~\ref{Tmain}, for smooth
$f$.

Next we show how Theorem~\ref{Tmain} implies a stronger version of
Lemma~\ref{Lhorn}(1) -- for smooth functions, not merely power series.
Set $a=0$ and suppose that $f$ has $N \leq \infty$ nonzero derivatives at
$a=0$. Let $l := \min(n, N)$ and denote the smallest $l$ of these orders
of derivatives by $m_0, \dots, m_{l-1}$. Now set
$\epsilon := \rho, \ p := 0, \ q := l$.
It follows by Theorem~\ref{Tmain} that the first $l$ nonzero Maclaurin
coefficients of $f(x)$ at $x=0$ are non-negative, hence positive as
desired. This shows the result -- and with a smaller test set used here
than in Lemma~\ref{Lhorn}(1).

Finally, we show how Lemma~\ref{Lhorn}(2) follows from
Theorem~\ref{Tmain}. By Lemma~\ref{Lhorn}(1), it suffices to consider
only the coefficients of degree $>m'$. Thus, suppose that the assumptions
of Lemma~\ref{Lhorn}(2) hold, and yet $c_{m'} < 0$ is not followed by $n$
positive coefficients of higher degree. First, if $c_{m'}$ is followed by
infinitely many negative coefficients of higher degree, then $f(x) < 0$
for $x \gg 0$. But then $f[ x {\bf 1}_{n \times n}] \not\in \bp_n(\R)$,
contradicting the hypotheses.

Thus $c_{m'}$ is followed by finitely many nonzero higher-order
coefficients. Without loss of generality, we may redefine $m'$ to be the
highest degree coefficient that is negative; thus,
\[
f(x) = \sum_{j=0}^d c_{n_j} x^{n_j} + c_{m'} x^{m'} + \sum_{k=0}^{l-1}
c_{m_k} x^{m_k},
\]
where $0 \leq n_0 < \cdots < n_d < m' < m_{l-1} < \cdots < m_0$ are
integers. Now define
\[
g(x) := \sum_{k=0}^{l-1} c_{m_k} c^{m_0 - m_k} + c_{m'} x^{m_0 - m'} +
\sum_{j=0}^d c_{n_j} x^{m_0 - n_j}.
\]
In other words, $g(x) = x^{m_0} f(1/x)$ for $x>0$. We claim that $g(x)$
entrywise preserves positivity on rank-one matrices $\bu \bu^T \in
\bp_n((0,\infty))$. Indeed,
\[
g[ \bu \bu^T ] = (\bu \bu^T)^{\circ m_0} \circ f[ (\bu \bu^T)^{\circ -1}]
= [ \bu^{\circ m_0} (\bu^{\circ m_0})^T ] \circ f[ \bu^{\circ -1}
(\bu^{\circ -1})^T],
\]
and this is positive semidefinite by the Schur product theorem and since
$\bu^{\circ -1} \in (0,\infty)^n$ as well. But this reduces us to the
previous case of Lemma~\ref{Lhorn}(1), which follows from
Theorem~\ref{Tmain} and implies that $g$ has at least $n$ positive
coefficients of degree lower than $x^{m_0 - m'}$. Therefore $l \geq n$,
contradicting the assumption.
\end{proof}

\begin{remark}\label{Rmollifier}
For completeness, we briefly discuss what happens if one tries to weaken
the smoothness hypothesis in Theorem~\ref{Tmain}. The way that
Horn/Loewner originally proved Theorem~\ref{Thorn} was to appeal to a
result of Boas and Widder \cite{Boas-Widder} by using mollifiers, that
is, convolving $f$ with $\phi(x/\delta)$ for $\delta>0$ and a certain
smooth function $\phi : (-1,0) \to (0, \infty)$. We now explain why it is
not possible to repeat this argument for Theorem~\ref{Tmain} outside of
the setting of the Horn--Loewner setting $p=n$.

Indeed, suppose $p < n$, which we may take to mean $m_p > p < n$. To
repeat the mollifier argument would at least involve changing the
hypothesis 
\[
f[a {\bf 1}_{n \times n} + t \bu \bu^T] \in \bp_n(\R), \ \ \forall t
\]
from each fixed $a$, to all $a$ belonging to an interval $J'$. But now if
we want $f_\delta^{(p)}(a) = 0$, then assuming that $f$ is nice enough
(e.g., if $f, f', \dots, f^{(p)}$ are bounded on $J'$), we compute:
\[
f^{(p)}_\delta(a) = \int_{-\delta}^0 f^{(p)}(a-u) \phi \left(
\frac{u}{\delta} \right) = 0.
\]
From this and since $f^{(p)} \geq 0$ on $I$ for $p<n$ (by
Theorem~\ref{Thorn}), it follows that $f^{(p)}$ vanishes on some interval
$J$ containing $a$, hence so does $f^{(r)}$ for all $r \geq p$. But this
does not reconcile with $f^{(m_p)}(a) \neq 0$ for $m_p > p$.
\end{remark}

Given Theorem~\ref{Tmain} and the discussion in Remark~\ref{Rmollifier},
we next observe that the original non-pointwise Theorem~\ref{Thorn}, as
well as its strengthening in Theorem~\ref{Thankel}, admit a small
generalization: \textit{the domain need not begin at $0$}.

\begin{cor}
Theorems~\ref{Thorn} and~\ref{Thankel} hold for every open subinterval $I
\subset (0,\infty)$.
\end{cor}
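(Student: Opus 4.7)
The plan is to exploit the pointwise character of Theorem~\ref{Tmain}: at each individual $a \in I$, the hypotheses of Theorems~\ref{Thorn} and~\ref{Thankel} already supply the rank-$2$ positive semidefinite Hankel test matrices $a \mathbf{1}_{n \times n} + t \bu \bu^T$ (for small enough $t \geq 0$) needed to invoke Theorem~\ref{Tmain} at $a$. Coupling this with the standard Horn--Loewner mollifier reduction from the continuous to the smooth setting is then enough.

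Concretely, write $I = (c, d)$ with $c > 0$, and first treat the smooth case. Let $f \in C^\infty(I)$ satisfy the Thankel hypothesis (it suffices to handle this one, since Theorem~\ref{Thorn}'s hypothesis is strictly stronger). Fix $a \in I$, let $\epsilon := d - a > 0$, and choose pairwise distinct $u_1, \dots, u_n \in (0, 1)$ with $\bu := (u_1, \dots, u_n)^T$. The matrices $a \mathbf{1}_{n \times n} + t \bu \bu^T$ for $t \in [0, \epsilon)$ are positive semidefinite with entries in $[a, d) \subset I$, so by hypothesis $f[a \mathbf{1}_{n \times n} + t \bu \bu^T] \in \bp_n(\R)$ for all such $t$. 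Corollary~\ref{Czeroder} applied at $a$ yields that the first $n$ or fewer nonzero derivatives of $f$ at $a$ are positive, which implies $f^{(k)}(a) \geq 0$ for $0 \leq k \leq n - 1$ (any such derivative is either zero or lies among those first $n$ nonzero ones).

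For merely continuous $f$, I would use a standard one-sided mollifier $\phi_\delta$ supported in $(-\delta, 0)$ and define $f_\delta(x) := \int f(x - u) \phi_\delta(u)\, du$, smooth on $I_\delta := (c, d - \delta)$. For any test matrix $A$ with entries in $I_\delta$, each $A + |u| \mathbf{1}_{n \times n}$ (for $u \in (-\delta, 0)$) is positive semidefinite with entries in $I$ and is Hankel whenever $A$ is; hence the Thankel hypothesis passes from $f$ to $f_\delta$ on $I_\delta$. The smooth case then gives $f_\delta^{(k)} \geq 0$ on $I_\delta$ for $0 \leq k \leq n - 1$; letting $\delta \to 0^+$ and invoking the Boas--Widder machinery (as corrected in Remark~\ref{RBWerratum}) recovers $f \in C^{n-3}(I)$ with $f^{(k)} \geq 0$ for $0 \leq k \leq n - 3$ and $f^{(n-3)}$ convex non-decreasing.

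The main obstacle is the range bookkeeping for the mollifier on the shifted interval, but this is just the original Horn--Loewner argument translated by $c$; everything else is automatic, since Theorem~\ref{Tmain} is genuinely local and has no preference for a domain starting at the origin.
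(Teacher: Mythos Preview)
Your argument is essentially correct, but it takes a far more elaborate route than the paper. The paper's proof is a one-line translation: given $I=(r,s)$, set $g:(0,s-r)\to\R$ by $g(x):=f(x+r)$; since $B\mapsto B+r\,{\bf 1}_{n\times n}$ carries each test matrix for $g$ on $(0,s-r)$ (whether the full $\bp_n$-set of Theorem~\ref{Thorn} or the rank-two Hankel family and $\bp_2$-set of Theorem~\ref{Thankel}) to a test matrix for $f$ on $I$, the hypotheses transfer verbatim and the original theorems apply to $g$. No appeal to Theorem~\ref{Tmain}, mollifiers, or Boas--Widder is needed at this stage, because all of that work has already been absorbed into the proofs of Theorems~\ref{Thorn} and~\ref{Thankel} themselves.

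Your approach instead re-proves those theorems on the shifted interval from scratch: Theorem~\ref{Tmain}/Corollary~\ref{Czeroder} pointwise for the smooth case, then the full mollifier-plus-Boas--Widder reduction for the continuous case. This is valid and illustrates nicely that Theorem~\ref{Tmain} is genuinely local, but it duplicates effort. One small slip to flag: under the Theorem~\ref{Thankel} hypothesis you cannot ``choose pairwise distinct $u_1,\dots,u_n\in(0,1)$'' freely, since that hypothesis only supplies the specific Hankel family with $\bu=(1,u_0,\dots,u_0^{n-1})^T$. The fix is immediate (rescale to $\bu'=(u_0,u_0^2,\dots,u_0^n)^T$, so that $a\,{\bf 1}+t\,\bu'\bu'^T=a\,{\bf 1}+(tu_0^2)\,\bu\bu^T$ is still a Thankel test matrix and now $\bu'$ has entries in $(0,1)$), but as written your ``by hypothesis'' step does not quite fire.
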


This result is \textit{equivalent} to the formulation for $I =
(0,\infty)$, and this equivalence is immediate. Indeed, if $I = (r,s)$
with $0 < r < s \leq \infty)$, then one works instead with the function
$g : (0, s-r) \to \R, \ g(x) := f(x+r)$, and this reduces the result to
Theorems~\ref{Thorn} and~\ref{Thankel} respectively.

Our final remark, before proving the Master Theorem~\ref{Tmain},
addresses a classical result by Boas and Widder:

\begin{remark}\label{RBWerratum}
As discussed in \cite{BGKP-hankel,GKR-lowrank, horn}, the Horn--Loewner
theorem for general functions -- on all open intervals $(0,\rho)$ for $0
< \rho \leq \infty$ -- in fact follows from its smooth version, using
mollifiers and a 1940 result by Boas and Widder. Here we record a minor
typo in Boas and Widder's proof of their (rather remarkable!) main result
in~\cite{Boas-Widder}.
The authors begin the proof of \cite[Lemma 13]{Boas-Widder} by claiming
that if $I \subset \R$ is an open interval and $f : I \to \R$ is
continuous and has non-negative forward differences of order $k \geq 3$,
then $f'$ is monotonic. However, this is not true as stated: for any $k
\geq 3$, the function $f(x) = x^3$ satisfies these hypotheses on $I =
(-1,1)$, but $f'$ is not monotone on $I$.

We now explain how to fix this issue. One has to claim instead that $f'$
is \textit{piecewise} monotone on $I$. This claim follows by applying in
turn \cite[Lemmas 9, 4, and 11]{Boas-Widder}. The piecewise monotonicity
then suffices to imply the existence of $f'(x\pm)$ at every point in $I$,
and the remainder of the proof of Lemma $13$ in \cite{Boas-Widder} goes
through verbatim.
\end{remark}

\subsection{Proof of the Master Theorem~\ref{Tmain}}

We conclude the paper by using Theorem~\ref{Phorn} to show the
Horn--Loewner Master Theorem~\ref{Tmain}. The following definition, which
may seem somewhat opaque at first glance, will feature in the proof.

\begin{definition}\label{Dadmissible}
Let $a \in \R$ and $\epsilon \in (0,\infty)$. Define $I := [a,
a+\epsilon)$ and suppose $f : I \to \R$ is smooth. We say that a tuple of
integers
\[
0 \leq m_0 < \cdots < m_{n-1}
\]
is \textit{admissible} for these data if, for all tuples $(l_0, \dots,
l_{n-1})$ of non-negative integers such that $\sum_k l_k \leq \sum_k
m_k$, at least one of the following three possibilities holds:
\begin{enumerate}
\item The $l_k$ are not pairwise distinct.
\item There exists $k$ such that $f^{(l_k)}(a) = 0$.
\item $\{ l_0, \dots, l_{n-1} \} = \{ m_0, \dots, m_{n-1} \}$.
\end{enumerate}
\end{definition}

\noindent Notice that this definition is independent of $\epsilon > 0$.
We now characterize all admissible tuples of each given length:

\begin{lemma}\label{Ladmissible}
Given $a \in \R,\ \epsilon>0$, an integer $n>0$, and $f : [a, a+\epsilon)
\to \R$ smooth, an integer tuple $0 \leq l_0 < \cdots < l_{n-1}$ is
admissible for these data if and only if:
\begin{enumerate}
\item Either $f$ has at most $n-1$ nonzero derivatives at $a$; or

\item If the integers $0 \leq m_0 < \cdots < m_{n-1}$ denote the $n$
lowest-order nonzero derivatives of $f(x)$ at $x=a$, then either $l_k =
m_k\ \forall k$ or $\sum_k l_k < \sum_k m_k$.
\end{enumerate}
In particular, given $a,\epsilon,f,n$, there are either finitely many
length $n$ admissible tuples, or every length $n$ tuple of pairwise
distinct non-negative integers is admissible.
\end{lemma}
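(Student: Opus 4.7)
The plan is to proceed by the natural dichotomy of the lemma: whether $f$ has fewer than $n$ nonzero derivatives at $a$, or at least $n$. The engine of the argument, in the latter case, is a simple minimality observation on the distinguished tuple. Writing $m_0 < \cdots < m_{n-1}$ for the $n$ lowest orders of nonzero derivatives of $f$ at $a$, I observe that for any strictly increasing tuple $l'_0 < \cdots < l'_{n-1}$ of non-negative integers with $f^{(l'_k)}(a) \neq 0$ for every $k$, one has $l'_k \geq m_k$ for each $k$, hence $\sum_k l'_k \geq \sum_k m_k$, with equality forcing $l'_k = m_k$ for every $k$. I will use this minimality observation repeatedly.

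First I will dispose of the case in which $f$ has at most $n-1$ nonzero derivatives at $a$ by a pigeonhole argument: any candidate comparison tuple $(l'_0, \dots, l'_{n-1})$ in Definition~\ref{Dadmissible} either repeats an entry (so alternative (1) holds) or provides $n$ distinct non-negative integer indices, at most $n-1$ of which can correspond to nonzero derivatives of $f$, forcing alternative (2). Therefore every strictly increasing tuple is admissible, and both directions of the lemma are established in this case.

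Next I will treat the case in which $f$ has at least $n$ nonzero derivatives at $a$. For the ``only if'' direction, given an admissible $(l_0, \dots, l_{n-1})$, I substitute the specific comparison tuple $(m_0, \dots, m_{n-1})$ into the admissibility quantifier: whenever $\sum_k m_k \leq \sum_k l_k$, this is a legitimate comparison; alternatives (1) and (2) both fail by construction of the $m_k$, so alternative (3) must hold, which together with the strict monotonicity of both tuples forces $l_k = m_k$ for every $k$. Contrapositively, if $(l_0,\dots,l_{n-1}) \neq (m_0,\dots,m_{n-1})$ then necessarily $\sum_k l_k < \sum_k m_k$. For the ``if'' direction, I take any comparison tuple $(l'_0, \dots, l'_{n-1})$ with $\sum_k l'_k \leq \sum_k l_k$ for which alternatives (1) and (2) both fail, and apply the minimality observation (after sorting) to deduce $\sum_k l'_k \geq \sum_k m_k$. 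If $\sum_k l_k < \sum_k m_k$ this is an immediate contradiction, so no such bad comparison tuple exists. If instead $(l_0,\dots,l_{n-1}) = (m_0,\dots,m_{n-1})$, the chain of inequalities collapses to equality, and minimality forces $\{l'_k\} = \{m_k\} = \{l_k\}$, which is alternative (3).

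The ``in particular'' clause is then immediate from the characterization: in the first case every length-$n$ tuple of pairwise distinct non-negative integers is admissible; in the second, admissible tuples all satisfy $\sum_k l_k \leq \sum_k m_k$, which is satisfied by only finitely many strictly increasing integer tuples. I do not anticipate a serious obstacle here; the only delicacy is bookkeeping between the notation of Definition~\ref{Dadmissible} (where the tested tuple is labeled $\bm$) and that of Lemma~\ref{Ladmissible} (where the tested tuple is labeled $l$ and the distinguished lowest-order tuple is $\bm$), but the minimality observation above decouples the two roles cleanly.
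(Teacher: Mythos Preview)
Your proof is correct and follows essentially the same approach as the paper's: the pigeonhole argument for the case of fewer than $n$ nonzero derivatives, testing against the distinguished tuple $(m_k)$ to show non-admissibility in the ``only if'' direction, and using minimality of $(m_k)$ to verify admissibility in the ``if'' direction. Your explicit ``minimality observation'' (that any strictly increasing $n$-tuple of orders of nonzero derivatives dominates $(m_k)$ coordinatewise) makes the final step slightly cleaner than the paper's phrasing, but the arguments are structurally identical.
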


The tuple $m_k = k, \ 0 \leq k < n$ was used in Loewner's determinant
computation and proof of Theorem~\ref{Thorn}, and this tuple is easily
seen to be admissible. As discussed in the discussion following
Remark~\ref{Rhorn}, in this special case the argument is somewhat less
involved and the underlying use of admissibility is not revealed; but
this subtlety is made clear in the proof of Theorem~\ref{Tmain}.

\begin{proof}
Clearly if $f$ has at most $n-1$ nonzero derivatives at $a$, then every
integer tuple $0 \leq l_0 < \cdots < l_{n-1}$ forms an admissible
tuple, by the pigeonhole principle.

Now suppose that $f^{(m_k)}(a) \neq 0$ for all $0 \leq k \leq n-1$, and
the $m_k$ are minimal with this property, as well as pairwise distinct.
One checks that the $m_k$ form an admissible tuple. For any tuple
$(m'_k)$ such that $\sum_k m'_k \geq \sum_k m_k$ but $\{ m'_0, \dots,
m'_{n-1} \} \neq \{ m_0, \dots, m_{n-1} \}$, we can choose $l_k := m_k$
in Definition~\ref{Dadmissible} to verify that $(m'_k)$ is not
admissible.

Finally, if $\sum_k m'_k < \sum_k m_k$, then we claim that $(m'_k)$ is an
admissible tuple. Indeed, choose any tuple $(l_k)$ of pairwise distinct
integers $l_k \geq 0$ with $\sum_k l_k \leq \sum_k m'_k < \sum_k m_k$.
Then conditions (1), (3) in Definition~\ref{Dadmissible} fail to hold, so
condition (2) holds by the minimality of the $m_k$.
\end{proof}

Finally, we have:

\begin{proof}[Proof of Theorem~\ref{Tmain}]
At the outset, set 
\[
m_0 := 0, \quad \dots, \quad m_{p-1} := p-1.
\]
Notice that it suffices to show that $f^{(m_k)}(a) \geq 0$ for $0 \leq k
\leq q-1$. The remaining derivatives at $x=a$ of $f(x)$ of order $\leq
m_{q-1}$ lie in $[p, m_{q-1}] \setminus \{ m_p, \dots, m_{q-1} \}$, and
hence are zero by the choice of the $m_k$.

The second observation is that the given test set of $n \times n$
matrices contains as principal submatrices a corresponding test set of $q
\times q$ matrices. Hence we may restrict to the leading principal $q
\times q$ submatrices of the given test set, and work with only this
reduced test set. In other words, we may assume without loss of
generality that $q=n$.

Having made these reductions, we prove the result. For each $0 \leq
\delta$ small enough, define $f_\delta(x) := f(x) + \delta x^{p-1}$ with
$x \in I$. For the data $a \geq 0$, any $\epsilon > 0$, and $f_\delta$
with any $\delta > 0$, note by Lemma~\ref{Ladmissible}(2) that the tuple
$(m_k)$ is indeed admissible, since the $m_k$ denote the orders of the
first $n$ nonzero derivatives\footnote{In the original proof in
\cite{horn} for $p=q=n$, Horn/Loewner use $f_\delta(x) := f(x) + \delta
x^n$; but for their purposes they could just as well have used any power
$\geq n-1$. As the present proof reveals, in order to examine the
coefficients of nonzero derivatives of order up to $n-1$, the optimal
power to use in the original Horn--Loewner setting would be $n-1$.} of
$f_\delta(x)$ at $x=a$.

Now given $a,t$ and the vector $\bu$ as in the theorem, define
$\Delta(t) := \det f_\delta[a {\bf 1}_{n \times n} + t \bu \bu^T]$ as in
Theorem~\ref{Phorn} (i.e., replacing $f, \bv$ by $f_\delta, \bu$
respectively). Then $\Delta(t) \geq 0$ for $t > 0$, by the hypotheses and
using the Schur product theorem for $x^{p-1}$. From this we obtain:
\[
0 \leq \lim_{t \to 0^+} \frac{\Delta(t)}{t^M}, \qquad \text{where } M :=
m_0 + \cdots + m_{n-1}
\]

\noindent provided this limit exists.

We now claim that $\Delta(0) = \Delta'(0) = \cdots = \Delta^{(M-1)}(0) =
0$, so that one can compute the limit using L'H\^{o}pital's rule.
Indeed, going through the proof of Theorem~\ref{Phorn}, if we choose any
tuple $(l_k)$ of non-negative integers, the
determinant~\eqref{Ederivative} vanishes if any two $l_k$ are equal, or
if $f^{(l_k)}(a) = 0$. Thus if $\sum_k l_k \leq \sum_k m_k$, then the
admissibility of the tuple $(m_k)$ (shown above) implies that $\{ l_0,
\dots, l_{n-1} \} = \{ m_0, \dots, m_{n-1} \}$.

In particular, $\Delta^{(L)}(0) = 0$ for all $0 \leq L < M$, by
Theorem~\ref{Phorn}. Continuing the computation,
\[
0 \leq \lim_{t \to 0^+} \frac{\Delta(t)}{t^M} =
\frac{\Delta^{(M)}(0)}{M!} = V(\bu)^2 s_\bm(\bu)^2 \prod_{k=0}^{n-1}
\frac{f_\delta^{(m_k)}(a)}{m_k!},
\]
where the equalities are by L'H\^{o}pital's rule, and by
Theorem~\ref{Phorn} and the admissibility of $\bm$.
In particular, the right-hand side here is non-negative. Since $\bu$ has
distinct coordinates, we can cancel all positive factors to conclude that
\begin{equation}\label{Eprodineq}
\prod_{k=0}^{n-1} f_\delta^{(m_k)}(a) \geq 0.
\end{equation}
Notice that for $n=1$, this proves the inequality, by sending $\delta \to
0^+$.

We now prove the result by induction on $n=q$. For the induction step, we
know that $f^{(m_k)}(a) \geq 0$ for $0 \leq k \leq n-2$, since the given
test set of $n \times n$ matrices contains as leading principal
submatrices a corresponding test set of $(n-1) \times (n-1)$ matrices.
There are now two cases. If $a=0$, then with $\delta = 0$ and $0 \leq k
\leq n-2$, we have $f_\delta^{(m_k)}(0) > 0$  as was just discussed, so
we may divide and obtain $f^{(m_{n-1})}(0) \geq 0$, as desired.

If instead $a>0$ and $\delta > 0$, then
\[
f_\delta^{(m_k)}(a) = f^{(m_k)}(a) + \delta \; p (p - 1) \cdots (p - m_k
+ 1) a^{p - m_k},
\]
and this is positive for $0 \leq k \leq n-2$ by the induction hypothesis.
(Here we consider separately the cases $0 \leq k < p$ and $k \geq p$.)
Hence by~\eqref{Eprodineq},
\[
f_\delta^{(m_{n-1})}(a) = f^{(m_{n-1})}(a) + {\bf 1}_{p=n} \; \delta \cdot
m_{n-1}! \geq 0, \qquad \forall 0 < \delta \ll 1
\]
and it follows that $f^{(m_{n-1})}(a) \geq 0$.
\end{proof}

\section*{Acknowledgments}

The author thanks Arvind Ayyer, Michael Schlosser, and Suvrit Sra for
valuable discussions.
The author would also like to thank Poornendu Kumar and Shubham Rastogi
for helpful discussions involving the Boas--Widder paper
\cite{Boas-Widder}. Furthermore, the author is very grateful to the
anonymous referee for carefully reading the paper and providing truly
detailed feedback, which helped improve the exposition.




\end{document}